\newcommand{\e}{\varepsilon}
\newcommand{\la}{\langle}
\newcommand{\ra}{\rangle}
\def\beq{\begin{equation}}
\def\eeq{\end{equation}}
\def\ba{\begin{array}}
\def\ea{\end{array}}
\def\beann{\begin{eqnarray*}}
\def\eeann{\end{eqnarray*}}
\def\bea{\begin{eqnarray}}
\def\eea{\end{eqnarray}}
\newtheorem{Ex}{Example}
\newtheorem{Lm}{Lemma}
\newtheorem{Th}{Theorem}
\newtheorem{Cor}{Corollary}
\theoremstyle{remark}
\newtheorem{Rem}{Remark}
\theoremstyle{definition}
\newtheorem{Def}{Definition}
\def\BLm{\begin{Lm}}
\def\ELm{\end{Lm}}
\newcommand{\email}[1]{\protect\href{mailto:#1}{#1}}
\title{Generalized Mirror Prox Algorithm for Monotone Variational Inequalities: Universality and Inexact Oracle
\thanks{Submitted to the editors DATE.
}}
\let\@fnsymbol\@arabic
\author{
Fedor Stonyakin
\thanks{V.I. Vernadsky Crimean Federal University, Simferopol; Moscow Institute of Physics and Technology, Moscow
  (\email{fedyor@mail.ru}).}
\and Alexander Gasnikov
\thanks{Moscow Institute of Physics and Technology, Moscow; Institute for Information Transmission Problems RAS, Moscow
	(\email{gasnikov@yandex.ru}).}
\and Pavel Dvurechensky
\thanks{Weierstrass Institute for Applied Analysis and Stochastics, Berlin
	(\email{pavel.dvurechensky@wias-berlin.de}).}	
\and Alexander Titov
\thanks{Moscow Institute of Physics and Technology, Moscow
	(\email{a.a.titov@phystech.edu}).}
\and Mohammad S. Alkousa
\thanks{Moscow Institute of Physics and Technology, Moscow
	(\email{mohammad.alkousa@phystech.edu}).}
}
\let\@fnsymbol\@arabic
\begin{document}

\maketitle

\begin{abstract}
 We introduce an inexact oracle model for variational inequalities with monotone operator, propose a numerical method, which solves such variational inequalities, and analyze its convergence rate. As a particular case, we consider variational inequalities with H\"older-continuous operator and show that our algorithm is universal. This means that, without knowing the H\"older exponent and H\"older constant, the algorithm has the least possible in the worst-case sense complexity for this class of variational inequalities. We also consider the case of variational inequalities with strongly monotone operator and generalize the algorithm for variational inequalities with inexact oracle and our universal method for this class of problems. Finally, we show, how our method can be applied to convex-concave saddle point problems with H\"older-continuous partial subgradients.
 \\ 
 \textbf{Keywords: }Variational inequality, monotone operator, H\"older continuity, inexact oracle, complexity estimate\\
 \textbf{AMS: }65K15
, 90C33
, 90C06
, 68Q25
, 65Y20
, 68W40
, 58E35

\end{abstract}

\section{Introduction} \label{S:Intro}
This paper is devoted to Minty \cite{mintyVI} (or weak \cite{nesterov2007dual}) variational inequalities with a monotone and continuous operator. \if 0 \red{The assumption of the continuity and monotonicity makes the considered problem equivalent to the Stampacchia variational inequality.}\fi Variational inequalities with monotone operators are closely connected with convex optimization problems and convex-concave saddle point problems. In the former case, the operator is just the subgradient of the objective function, and in the latter case the operator is composed from partial subgradients of the objective in the saddle point problem. Studying variational inequalities is important also for equilibrium and complementarity problems \cite{harker1990finite,facchinei2007finite} and saddle point problems has become an important part of research in machine learning \cite{arjovsky2017wasserstein,kniaz2021adversarial}.

Our focus here is on numerical methods for such problems, their convergence rate and complexity estimates. Significant contribution to the development of numerical methods for solving variational inequalities was made in 1970's, when the extragradient method was proposed in \cite{korpelevich1976extragradient}. More recently, \cite{nemirovski2004prox} proposed a non-Euclidean variant of this method, called Mirror Prox algorithm, which can be applied for Lipschitz continuous operators. 

Different methods with similar complexity were also proposed in \cite{solodov1999hybrid,auslender2005interior,nesterov2007dual,monteiro2010complexity,koshal2011multiuser,gasnikov2019adaptive}. Besides that, in \cite{nesterov2007dual}, Nesterov proposed a method for variational inequalities with bounded variation of the operator, i.e. with non-smooth operator. He raised also a question, whether it is possible to propose a method, which automatically "adjusts to the actual level of smoothness of the current problem instance". One of the goals of this paper is to propose such an algorithm.

To this aim, we consider a more general class of operators being so-called H\"older-continuous. This class covers both the case of operators with bounded variation and Lipschitz-continuous operators. Variational inequalities with\\ H\"older-continuous monotone operator were already considered in \cite{nemirovski2004prox}, where a special choice of the stepsize for the Mirror Prox algorithm led to the optimal complexity for this class of problems; see \cite{nemirovsky1983problem}. The authors of \cite{dang2015convergence} consider variational inequalities with non-monotone H\"older-continuous operator. Unfortunately, both papers use H\"older constant and exponent to define the stepsize of their methods. This is in contrast to optimization, where so called universal algorithms were proposed, which do not use the information about the H\"older exponent and  H\"older constant; see \cite{nesterov2015universal,lan2015generalized,gasnikov2018universal,baimurzina2019universal,dvurechensky2017gradient,dvurechensky2017universal,guminov2017universal,guminov2019accelerated,nesterov2020primal-dual}. In this paper, we propose a universal method for variational inequalities with H\"older-continuous monotone operator. We also generalize this method for the case of variational inequalities with strongly monotone operator. Such problems were considered in \cite{nestrov2011strongly_VI}, but only for the case of  Lipschitz-continuous operator with known Lipschitz constant.

On the other hand, as it was shown for optimization problems in \cite{devolder2014first,nesterov2015universal}, universal methods have a natural connection with methods for smooth problems with inexact oracle. Namely, it can be shown that a function with H\"older-continuous subgradient can be considered as a Lipschitz-smooth function with inexact oracle. Despite that there are many works on optimization methods with inexact oracle, see e.g. \cite{aspremont2008smooth,devolder2014first,dvurechensky2016stochastic,gasnikov2016stochasticInter,bogolubsky2016learning,dvurechensky2017universal,cohen2018acceleration,aybat2018robust,stonyakin2019gradient}, we 
are not aware of any extensions of these non-stochastic definitions of inexact oracle to the variational inequality setting and methods for variational inequalities with inexactly given operator, except stochastic case. By this paper, we introduce a theory of methods for variational inequalities with deterministic inexact oracle, see also the follow-up work \cite{stonyakin2020inexact}.

\section{Preliminaries}\label{S:prel}
We start with the general notations, problem statement, and description of proximal setup.
Let $E$ be a finite-dimensional real vector space and $E^*$ be its dual. We denote the value of a linear function $u \in E^*$ at $x\in E$ by $\langle u, x \rangle$. Let $\|\cdot\|$ be a general norm on $E$, $\|\cdot\|_{*}$ be its dual, defined by \\$\|u\|_{*} := \max\limits_{x} \big\{ \langle u, x \rangle, \| x \| \leq 1 \big\}$. We use $\nabla f(x)$ to denote any subgradient of a function $f$ at a point $x \in {\rm dom} f$.

The main problem, we consider, is the following Minty variational inequality (VI)
\begin{equation}
\text{Find} \quad x_* \in Q : \quad \langle g(x), x_* - x \rangle \leq 0, \quad \forall x \in Q
\label{eq:PrSt}
\end{equation}
where $Q$ is convex (non-necessary compact) subset of finite-dimensional vector space $E$, $g: Q \to E^*$ is continuous, monotone operator
$$
\langle g(x) - g(y) , x - y \rangle \geq 0, \quad x,y \in Q,
$$
satisfying H\"older condition on $Q$, i.e., for some $\nu \in [0,1]$  and $L_{\nu} \geq 0$,
\begin{equation}
\label{eq:GHoeldDef}
\|g(x) - g(y)\|_* \leq L_{\nu}\|x-y\|^{\nu}, \quad x,y \in Q.
\end{equation}
We refer to $\nu$ as H\"older exponent and to $L_\nu$ as H\"older constant. We assume that the variational inequality \eqref{eq:PrSt} has a solution.
Under the assumption of continuity and monotonicity of the operator $g$, the problem \eqref{eq:PrSt} is equivalent to a Stampacchia \cite{mintyVI} (or strong \cite{nesterov2007dual}) variational inequality, in which the goal is to find $x_* \in Q $ such that 
\begin{equation}
\label{eq:strongVI}
\langle g(x_*), x_* - x \rangle \leq 0, \quad \forall x \in Q.
\end{equation}
Following \cite{nesterov2007dual,antonakopoulos2021adaptive}, to assess the quality of a candidate solution $\widehat{x}$, we use a convex non-empty compact subset $C$ of the set $Q$ and the following  restricted  gap (or merit) function
	\begin{equation}
	\label{eq:gap_func}
	{\rm Gap}_C(\widehat{x}) = \max_{u \in C} \langle g(u), \widehat{x}  - u \rangle .
	\end{equation}
	Proposition 1 in \cite{antonakopoulos2021adaptive} states that ${\rm Gap}_C(\widehat{x}) \geq 0 $ whenever $\widehat{x} \in C$ and if ${\rm Gap}_C(\widehat{x})= 0 $ and $C$ contains a neighborhood of $\widehat{x}$, then $\widehat{x}$ is a solution of \eqref{eq:strongVI}.
	This motivates our goal that is to find an approximate solution of the problem, that is, a point $\widehat{x} \in Q$ such that, for some $\varepsilon >0$
	\begin{equation}
	\label{eq:Appr}
	{\rm Gap}_C(\widehat{x}) = \max_{u \in C} \langle g(u), \widehat{x}  - u \rangle \leq \varepsilon.
	\end{equation}

As already mentioned, \cite{nemirovski2004prox} proposed Mirror Prox algorithm under the assumption of compactness of $Q$ and $L_1$-Lipschitz continuity of the operator, i.e., $g$ satisfying \eqref{eq:GHoeldDef} with $\nu=1$ and $L_1$. This method has complexity $O\left(\frac{L_1R^2}{\varepsilon}\right)$, where $R$ characterizes the diameter of the set $Q$ and $\varepsilon$ is the desired accuracy. By complexity we mean the number of iterations of an algorithm to find a point $\widehat{x} \in Q$ such that \eqref{eq:Appr} holds. For the case of variational inequalities with bounded variation of the operator $g$, i.e., $g$ satisfying \eqref{eq:GHoeldDef} with $\nu=0$ and $L_0$
\cite{nesterov2007dual} proposed a method with complexity $O\left(\frac{L_0^2R^2}{\varepsilon^2}\right)$.
The method for variational inequalities with H\"older-continuous monotone operator \cite{nemirovski2004prox} has the complexity
$$
O\left(\left(\frac{L_{\nu}}{\varepsilon}\right)^{\frac{2}{1+\nu}} R^2 \right),
$$
which is optimal for the case of $\nu = 1$ and for the case of $\nu = 0$ \cite{ouyang2021lower,nemirovsky1983problem}.

Next we give several definitions, which are necessary for introducing the method. We choose a {\it prox-function} $d(x)$, which is continuous and convex on $Q$, and also is
\begin{enumerate}
	\item continuously differentiable at the relative interior of $Q$;
	\item $1$-strongly convex on $Q$ with respect to $\|\cdot\|$, i.e., for any $x \in Q^0, y \in Q$ $d(y)-d(x) -\langle \nabla d(x) ,y-x \rangle \geq \frac12\|y-x\|^2$.
\end{enumerate}
Without loss of generality, we assume that $\min\limits_{x\in Q} d(x) = 0$.

We define also the corresponding {\it Bregman divergence} $$V[z] (x) := d(x) - d(z) - \langle \nabla d(z), x - z \rangle, \ x \in Q, z \in Q^0.$$ Standard proximal setups, i.e., Euclidean, entropy, $\ell_1/\ell_2$, simplex, nuclear norm, spectahedron can be found in \cite{ben-tal2015lectures}.
Below we use Bregman divergence in so-called {\it prox-mapping}
\begin{equation}
\min_{x \in Q} \left\{\langle g,x \rangle + M V[\bar{x}](x)		 \right\} ,
\label{eq:PrMap}
\end{equation}
where $M >0 $, $\bar{x} \in Q^0$, $g \in E^*$ are given. We allow this problem to be solved inexactly in the following sense inspired by \cite{ben-tal2015lectures}.
Assume that we are given $\delta_{pu} >0$, $M >0 $, $\bar{x} \in Q^0$, $g \in E^*$. Further, we assume that for an arbitrary $\delta_{pc} >0$, we can calculate $\tilde{x} = \tilde{x}(\bar{x},g,M,\delta_{pc},\delta_{pu}) \in Q^0$ such that
	\begin{equation}
	\left\langle g + M\left[\nabla d(\tilde{x}) - \nabla d(\bar{x}) \right], u - \tilde{x} \right\rangle \geq - \delta_{pc}-\delta_{pu}, \quad \forall u \in Q.
	\label{eq:InPrMap}
	\end{equation}
	We call the point $\tilde{x}$ an {\it inexact prox-mapping} and write
\begin{equation}
\tilde{x} :=  {\mathop {\arg \min }\limits_{x\in Q}}^{\delta_{pc}+\delta_{pu}}\left\{\langle g,x \rangle + M V[\bar{x}](x)	 \right\}.
\label{eq:InPrMap1}
\end{equation}
Here $\delta_{pu}$ denotes the error of the prox-mapping, which is not controlled, and $\delta_{pc}$ denotes the error of the prox-mapping, which can be controlled and made as small as desired.

\section{Inexact Oracle for Variational Inequalities}\label{S:IO}
Our goal is to consider, in a unified manner, VIs with H\"older-continuous operator and VIs with inexact values of the operator. This can be done by considering H\"older-continuous operator as a particular case of Lipschitz-continuous operator with some inexactness. Thus, we introduce the following definition of inexact oracle for the operator $g$.

\begin{Def}\label{Def:IO}
	\textit{
		Assume that for some $\delta_u > 0$ (uncontrolled error) and for any number $\delta_c > 0$ (controlled error) there exists a constant $L(\delta_c) \in ]0, +\infty[$ such that, for any points $x,y \in Q$, one can calculate $\tilde{g}(x,\delta_c,\delta_u)$ and\\ $\tilde{g}(y,\delta_c,\delta_u) \in E^*$ satisfying
		\begin{align}
		\hspace{-1em}\langle \tilde{g}(y,\delta_c,\delta_u) - \tilde{g}(x,\delta_c,\delta_u), y - z \rangle & \leq \frac{L(\delta_c)}{2}\left(\|y-x\|^2 + \|y-z\|^2\right) + \delta_c+\delta_u, 	\label{eq:g_or_def} \\ 
		\langle \tilde{g}(y,\delta_c,\delta_u) - g(y), y - z \rangle & \geq - \delta_u, \quad \forall z \in Q. \label{eq:g_or_deff}
		\end{align}
		Then, the operator $\tilde{g}(\cdot,\delta_c,\delta_u)$ is called inexact oracle for the operator $g$.}
\end{Def}

In this definition, $\delta_c$ represents the error of the oracle, which we can control and make as small as we would like to. On the opposite, $\delta_u$ represents the uncontrolled error, which can be understood as an error in the problem data, for example, when $g$ is given as a solution to an auxiliary problem. We notice also that if the inequality \eqref{eq:g_or_def} holds for some $L(\delta_c)$, then it holds also for any $\widetilde{L}(\delta_c) \geq L(\delta_c)$.

Example \ref{Ex:HoldInexact} below shows that this definition satisfies our goal of covering both the case of H\"older-continuous operator and the case of inexact values of the operator. The following technical lemma is the main clue for this example.

\begin{Lm}\label{Lm:HoldToLip}
	Let $a,b,c \geq 0$, $\nu \in [0,1]$. Then, for any $\delta > 0$,
	$$
	ab^{\nu}c \leq \left(\frac{1}{\delta}\right)^{\frac{1-\nu}{1+\nu}} \frac{a^{\frac{2}{1+\nu}}}{2} \left(b^2+c^2\right) + \frac{\delta}{2}.
	$$
\end{Lm}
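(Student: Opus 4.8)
The plan is to recognize the claimed bound as an instance of the weighted arithmetic--geometric mean inequality (equivalently, a three-term Young inequality). Writing $\alpha = \frac{1-\nu}{1+\nu}$, I would introduce the three nonnegative quantities
$$
T_1 = \left(\frac{1}{\delta}\right)^{\alpha}\frac{a^{\frac{2}{1+\nu}}}{2}\,b^2, \qquad T_2 = \left(\frac{1}{\delta}\right)^{\alpha}\frac{a^{\frac{2}{1+\nu}}}{2}\,c^2, \qquad T_3 = \frac{\delta}{2},
$$
so that the right-hand side of the lemma is exactly $T_1 + T_2 + T_3$. The whole statement then reduces to showing $ab^{\nu}c \le T_1 + T_2 + T_3$.

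The crux is to find weights $\lambda_1,\lambda_2,\lambda_3 \ge 0$ with $\lambda_1+\lambda_2+\lambda_3 = 1$ for which the weighted geometric mean $T_1^{\lambda_1}T_2^{\lambda_2}T_3^{\lambda_3}$ reproduces the left-hand side $ab^{\nu}c$ up to a constant. Matching the exponents of $b$, of $c$, and of $a$ forces $\lambda_1 = \frac{\nu}{2}$, $\lambda_2 = \frac12$, $\lambda_3 = \frac{1-\nu}{2}$; these are nonnegative because $\nu \in [0,1]$ and they sum to $1$. The main (though routine) bookkeeping is to verify that with this choice the powers of $\delta$ cancel, the powers of $a$ collapse to $a^1$ via $\frac{2}{1+\nu}\cdot\frac{1+\nu}{2}=1$, and the three factors of $\tfrac12$ combine to give exactly $T_1^{\lambda_1}T_2^{\lambda_2}T_3^{\lambda_3} = \frac12\,ab^{\nu}c$. (Equivalently one may phrase this as Young's inequality with conjugate exponents $\frac{2}{\nu}, 2, \frac{2}{1-\nu}$.)

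With the weights in hand, the weighted AM--GM inequality $T_1^{\lambda_1}T_2^{\lambda_2}T_3^{\lambda_3} \le \lambda_1 T_1 + \lambda_2 T_2 + \lambda_3 T_3$ gives $\frac12\,ab^{\nu}c \le \lambda_1 T_1 + \lambda_2 T_2 + \lambda_3 T_3$. The finishing observation is that each weight satisfies $\lambda_i \le \frac12$ (again because $0 \le \nu \le 1$), so the weighted sum is bounded by $\frac12(T_1+T_2+T_3)$; cancelling the common factor $\frac12$ yields $ab^{\nu}c \le T_1+T_2+T_3$, which is the assertion. This last step is exactly what produces the clean constant $\frac12$ in front of $(b^2+c^2)$. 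The only genuinely delicate point is locating the correct weights and confirming the $\delta$- and $a$-exponent cancellations: a naive intermediate bound such as $b^{\nu}c \le (b^2+c^2)^{(1+\nu)/2}$ is too lossy and already fails at $\nu=1$, so the symmetric three-term split of $b^2$ and $c^2$ is essential. Degenerate cases ($\nu \in \{0,1\}$, or some of $a,b,c$ equal to zero) make one weight vanish and are handled by the usual convention that a zero-weight factor drops out of the geometric mean.
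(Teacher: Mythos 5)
Your proof is correct, but it takes a genuinely different route from the paper's. You apply the weighted AM--GM (three-term Young) inequality directly to the three summands $T_1,T_2,T_3$ of the right-hand side, with weights $\lambda_1=\frac{\nu}{2}$, $\lambda_2=\frac12$, $\lambda_3=\frac{1-\nu}{2}$; the bookkeeping you describe does check out (the $a$-powers collapse to $a^1$, the $\delta$-powers cancel because $\frac{1-\nu}{1+\nu}\cdot\frac{1+\nu}{2}=\frac{1-\nu}{2}=\lambda_3$, and the three factors of $\frac12$ give exactly $\frac12\,ab^{\nu}c$), and the final passage from $\sum_i\lambda_i T_i$ to $\frac12\sum_i T_i$ is valid since every $\lambda_i\le\frac12$ and $T_i\ge 0$. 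The paper instead proves the elementary scalar fact $x^{2\nu}\le x^2+1$ for $x\ge 0$ (by the two cases $x\le 1$ and $x\ge 1$), combines it with the ordinary two-term AM--GM to get $\alpha^{\nu}\beta\le\frac{\alpha^{2}}{2}+\frac{\beta^{2}}{2}+\frac12$ for all $\alpha,\beta\ge 0$, and then substitutes $\alpha=ba^{\frac{1}{1+\nu}}\delta^{-\frac{1}{1+\nu}}$, $\beta=ca^{\frac{1}{1+\nu}}\delta^{-\frac{1}{1+\nu}}$ and multiplies through by $\delta$. Both arguments perform the same homogenization in $\delta$; yours makes the choice of weights, and hence the exact mechanism of the cancellations, explicit and systematic (essentially Young's inequality with exponents $\frac{2}{\nu},\,2,\,\frac{2}{1-\nu}$), while the paper's needs nothing beyond unweighted two-term AM--GM plus a one-line case analysis, with the additive $\frac{\delta}{2}$ appearing transparently as the slack in $x^{2\nu}\le x^2+1$. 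One small caveat: your aside that $b^{\nu}c\le(b^2+c^2)^{(1+\nu)/2}$ ``already fails at $\nu=1$'' is imprecise --- that inequality is true for all $\nu\in[0,1]$; what fails is only that it is too lossy to recover the constant $\frac12$ --- but this remark plays no role in your argument.
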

The proof of this lemma is given in the Appendix A.


\begin{Ex}\textit{(H\"older-continuous operator with inexact values on a bounded set).}\label{Ex:HoldInexact}
	Let us assume that:
	\begin{enumerate}
		\item The operator $g(x)$ is H\"older-continuous on $Q$, i.e., satisfies \eqref{eq:GHoeldDef}.
		\item\label{eq:def_in_2_ex3} The set $Q$ is bounded with $\max_{x, y \in Q} \|x-y\| \leq D$.
		\item\label{eq:def_in_3_ex3} There exist $\bar{\delta}_u > 0$ and at any point $x \in Q$, we can calculate approximation $\bar{g}(x)$ for $g(x)$ such that $\|\bar{g}(x)-g(x)\|_* \leq \bar{\delta}_u$.
	\end{enumerate}
	Then, for any $z \in Q$,
	\begin{align*}
	\langle \bar{g}(y) - \bar{g}(x) , y-z \rangle &= \langle \bar{g}(y) - g(y) , y-z \rangle - \langle \bar{g}(x) - g(x) , y-z \rangle + \langle g(y) - g(x) , y-z \rangle \\
	& \leq  2 \bar{\delta}_u D + \|g(y) - g(x) \|_{*} \|y-z\| \leq 2 \bar{\delta}_u D + L_{\nu} \|y-x\|^{\nu} \|y-z\| \\
	& \leq  2 \bar{\delta}_u D + \frac{1}{2}\left(\frac{1}{\delta}\right)^{\frac{1-\nu}{1+\nu}} L_{\nu}^{\frac{2}{1+\nu}} \left(\|x-y\|^2+\|y-z\|^2\right) + \frac{\delta}{2},
	\end{align*}
	where Lemma \ref{Lm:HoldToLip} was used to get the last inequality.\\
	Thus, we can set $\delta_u = 2 \bar{\delta}_u D$,  $\delta_c =\frac{\delta}{2}$, and $L(\delta_c) = \left(\frac{1}{2\delta_c}\right)^{\frac{1-\nu}{1+\nu}} L_{\nu}^{\frac{2}{1+\nu}}$ to get \eqref{eq:g_or_def}. 
	Further,
	$$
	|\langle \overline{g}(y)-g(y),y-z \rangle|\leq \| \overline{g}(y)-g(y) \|_* \|y-z\| \leq \overline{\delta}_{u}D = \frac{1}{2}\delta_u < \delta_u
	$$
	and we have $
	\langle \overline{g}(y)-g(y),y-z \rangle >  - \delta_u$, which is \eqref{eq:g_or_deff}.
\end{Ex}	


\begin{Ex}\textit{(Connection with $(\delta,L)$-oracle in optimization).}\label{ExampVI2.5}
	Let convex function $f:\,Q\rightarrow\mathbb{R}$, where $Q$ is a convex compact, be endowed with $(\delta,L)$-oracle  \cite{devolder2014first}. This means that for some $L  > 0$ at any $y \in Q$ there exists a pair
	$(f_{\delta}(y),g_{\delta}(y))\in \mathbb{R}\times\mathbb{R}^{n}$ such that, for all $y \in Q$, 
	\begin{equation}\label{EEE2}
	f_{\delta}(y)+\langle g_{\delta}(y),x-y\rangle\leq f(x)\leq f_{\delta}(y)+\langle g_{\delta}(y),x-y\rangle+\frac{L\|x-y\|^{2}}{2}+\delta.
	\end{equation}
	
	Let $g(y)$ be any selector of the exact subgradients of $f$. Under an additional assumption that $ \|g_{\delta}(y)-g(y) \| \leq \bar{\delta}_u$, we show that $g_{\delta}(y)$ is an inexact oracle for the operator $g(y)$. This is similar to the exact case, when the subgradient of a convex function defines a monotone operator.
	It is easy to show \eqref{eq:g_or_deff}. Indeed,
	\begin{equation}
	\label{eq:ex4_10s2}
	\langle g_{\delta}(y)-g(y),y-x \rangle \geq -
	\bar{\delta}_u \|x-y\| \geq - \bar{\delta}_u  D, \quad \forall x,y \in Q.
	\end{equation}
	Let us show that \eqref{eq:g_or_def} is satisfied  with $L(\delta_{c})=L,\,\delta_{c}=0$,\\ $\delta_{u}=\max\{2\delta,\bar{\delta}_uD\}$, and $\tilde{g}(y,\delta_c,\delta_u) = g_{\delta}(y)$. Indeed,
	\begin{align}
	 \langle g_{\delta}(z)- g_{\delta}(y),z-x\rangle &= \langle g_{\delta}(y)- g_{\delta}(z),x-z\rangle \notag  \\
	&= \langle g_{\delta}(y),x-y\rangle-\langle g_{\delta}(z),x-z\rangle-\langle g_{\delta}(y),z-y\rangle \notag \\
	&= (f(x)-f_{\delta}(z)-\la g_{\delta}(z),x-z \ra) + (f(z)-f_{\delta}(y) - \la g_{\delta}(y), z-y\ra)  - \notag \\
	&\qquad \qquad \qquad \qquad - (f(x) -f_{\delta}(y) - \la g_{\delta}(y),x-y\ra) + (f_{\delta}(z)-f(z)) \notag \\
	& \leq \left(\frac{L}{2}\|x-z\|^2+\delta\right)+\left(\frac{L}{2}\|z-y\|^2+\delta\right), \notag
	\end{align}
	where in the last inequality we used twice the right inequality in \eqref{EEE2}, and twice the left inequality in \eqref{EEE2}.
\end{Ex}

\section{Generalized Mirror Prox}\label{S_GMP}
In this section, we introduce a new algorithm, which we call Generalized Mirror Prox (GMP), for problem \eqref{eq:PrSt} with inexact oracle for $g$ in the sense of Definition \ref{Def:IO}. The algorithm is listed below as Algorithm \ref{Alg:UMP}.

\begin{algorithm}[ht]
	\caption{Generalized Mirror Prox}
	\label{Alg:UMP}
	\begin{algorithmic}[1]
		\REQUIRE accuracy $\varepsilon > 0$, level $\delta_u >0$ of the uncontrolled oracle error, level $\delta_{pu} >0$ of the uncontrolled error of prox-mapping, initial guess $M_{-1}$ for $L(\delta_c)$, 
		prox-setup: $d(x)$, $V[z] (x)$.
		\STATE Set $k=0$, $z_0 = \arg \min_{u \in Q} d(u)$.
		\FOR{$k=0,1,...$}
		\STATE Set $i_k=0$, $\delta_{c,k} = \frac{\varepsilon}{4}$, $\delta_{pc,k} = \frac{\varepsilon}{8}$.
		\REPEAT
		\STATE Set $M_k=2^{i_k-1}M_{k-1}$.
		\STATE Calculate
		\begin{equation}\label{eq:UMPwStep}
		w_k={\mathop {\arg \min }\limits_{x\in Q}}^{\delta_{pc,k}+\delta_{pu}} \left\{\langle \tilde{g}(z_k,\delta_{c,k},\delta_u),x \rangle + M_kV[z_k](x) 		 \right\}.
		\end{equation}
		\begin{equation}\label{eq:UMPzStep}
		z_{k+1}={\mathop {\arg \min }\limits_{x\in Q}}^{\delta_{pc,k}+\delta_{pu}} \left\{\langle \tilde{g}(w_k,\delta_{c,k},\delta_u),x \rangle + M_kV[z_k](x) 		 \right\}.
		\end{equation}
		\vspace{-1em}
		\STATE $i_k=i_k+1$.
		\UNTIL{
			\begin{equation}\label{eq:UMPCheck}
			\hspace{-2em}\langle \tilde{g}(w_k,\delta_c,\delta_u) - \tilde{g}(z_k,\delta_c,\delta_u), w_k - z_{k+1} \rangle \leq \frac{M_k}{2}\bigl(\|w_k-z_k\|^2 
			+ \|w_k-z_{k+1}\|^2\bigr) +
			\delta_{c,k}+\delta_u.
			\end{equation}}
		\STATE Set $k=k+1$.
		\ENDFOR
		\ENSURE $\widehat{w}_k = \frac{1}{\sum_{i=0}^{k-1}M_i^{-1}}\sum_{i=0}^{k-1}M_i^{-1}w_i$.
	\end{algorithmic}
\end{algorithm}

\begin{Th}\label{Th:UMPGenRate}
	Assume that $g(\cdot)$ and $\tilde{g}(\cdot,\delta_c,\delta_u)$ satisfy \eqref{eq:g_or_def} and \eqref{eq:g_or_deff}. Then, for any $k \geq 1$ and any $u \in Q$,
	\begin{equation*}\label{eq:UMPGenRate}
	\frac{1}{\sum_{i=0}^{k-1}M_i^{-1}} \sum_{i=0}^{k-1} M_i^{-1} \langle g(w_i) , w_i - u \rangle \leq  \frac{1}{\sum_{i=0}^{k-1}M_i^{-1}}  (V[z_0](u)-V[z_{k}](u)) 
	+\frac{\varepsilon}{2}+\delta_u+ 2 \delta_{pu}.
	\end{equation*}
\end{Th}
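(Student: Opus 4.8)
The plan is to run the classical Mirror Prox telescoping argument, with the extra task of tracking the controlled and uncontrolled errors separately. Write $\tg_{z_k}=\tg(z_k,\delta_{c,k},\delta_u)$ and $\tg_{w_k}=\tg(w_k,\delta_{c,k},\delta_u)$. First I would expand the two inexact prox-mappings \eqref{eq:UMPwStep} and \eqref{eq:UMPzStep} through their defining inequality \eqref{eq:InPrMap}, taken with $\bar x=z_k$ and $M=M_k$: for every $u\in Q$,
\[
\langle \tg_{z_k},u-w_k\rangle+M_k\langle\nabla d(w_k)-\nabla d(z_k),u-w_k\rangle\ge-\delta_{pc,k}-\delta_{pu},
\]
\[
\langle \tg_{w_k},u-z_{k+1}\rangle+M_k\langle\nabla d(z_{k+1})-\nabla d(z_k),u-z_{k+1}\rangle\ge-\delta_{pc,k}-\delta_{pu}.
\]
Next I would invoke the three-point identity $\langle\nabla d(a)-\nabla d(b),c-a\rangle=V[b](c)-V[a](c)-V[b](a)$, which follows directly from the definition of $V$, and apply it with $(a,b,c)=(z_{k+1},z_k,u)$ in the second inequality and with $(a,b,c)=(w_k,z_k,z_{k+1})$ in the first (after setting $u=z_{k+1}$ there). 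This turns every gradient difference into Bregman terms and produces a mixed term $M_kV[z_k](z_{k+1})$ with opposite signs in the two inequalities.

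The heart of the argument is the decomposition
\[
\langle \tg_{w_k},w_k-u\rangle=\langle \tg_{w_k}-\tg_{z_k},w_k-z_{k+1}\rangle+\langle \tg_{z_k},w_k-z_{k+1}\rangle+\langle \tg_{w_k},z_{k+1}-u\rangle .
\]
I would bound the last two inner products by the two prox-mapping inequalities just rewritten, and bound the first one by the inner-loop exit test \eqref{eq:UMPCheck}, which holds at termination. Summing the three bounds, the mixed terms $\pm M_kV[z_k](z_{k+1})$ cancel, and the quadratic terms $\tfrac{M_k}{2}(\|w_k-z_k\|^2+\|w_k-z_{k+1}\|^2)$ coming from \eqref{eq:UMPCheck} are exactly absorbed by $-M_kV[z_k](w_k)-M_kV[w_k](z_{k+1})\le-\tfrac{M_k}{2}\|w_k-z_k\|^2-\tfrac{M_k}{2}\|w_k-z_{k+1}\|^2$, using $1$-strong convexity of $d$. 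What survives is the clean per-step estimate
\[
\langle \tg_{w_k},w_k-u\rangle\le M_k\big(V[z_k](u)-V[z_{k+1}](u)\big)+\tfrac{\varepsilon}{2}+\delta_u+2\delta_{pc,k}+2\delta_{pu}.
\]

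To finish, I would replace the inexact operator value by the true one: applying \eqref{eq:g_or_deff} with $z=u$ gives $\langle g(w_k),w_k-u\rangle\le\langle \tg_{w_k},w_k-u\rangle+\delta_u$. Then I divide by $M_k$, sum over $i=0,\dots,k-1$ with weights $M_i^{-1}$, telescope $\sum_i\big(V[z_i](u)-V[z_{i+1}](u)\big)=V[z_0](u)-V[z_k](u)$, divide by $\sum_{i=0}^{k-1}M_i^{-1}$, and insert the algorithmic values $\delta_{c,k}=\varepsilon/4$, $\delta_{pc,k}=\varepsilon/8$ to consolidate the controlled errors, thereby obtaining \eqref{eq:UMPGenRate}. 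The main obstacle is precisely the combination step: one must verify that the quadratic residual produced by the exit test \eqref{eq:UMPCheck} is dominated term-by-term by the negative Bregman quantities furnished by strong convexity, and that the cross term $M_kV[z_k](z_{k+1})$ cancels between the two prox-mapping inequalities. This cancellation is what the extragradient structure (evaluating the operator at $w_k$ for the $z_{k+1}$-update, so that the exit test controls the gap $\tg_{w_k}-\tg_{z_k}$) is designed to enable; keeping the tunable errors $\delta_{c,k},\delta_{pc,k}$ separate from the fixed errors $\delta_u,\delta_{pu}$ throughout is the only bookkeeping genuinely new to the inexact-oracle setting.
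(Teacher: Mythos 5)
Your proposal follows essentially the same route as the paper's own proof: the same three-term decomposition of $\langle \tilde g(w_k), w_k-u\rangle$, the same use of the two inexact prox-mapping inequalities \eqref{eq:Th:UMPGenRatePr1}--\eqref{eq:Th:UMPGenRatePr2} combined with the Bregman three-point identity so that the cross term $M_k V[z_k](z_{k+1})$ cancels, strong convexity of $d$ absorbing the quadratic terms furnished by the exit test \eqref{eq:UMPCheck}, weighted telescoping, and the final passage from $\tilde g$ to $g$ via \eqref{eq:g_or_deff}. The only difference is cosmetic bookkeeping: you keep the controlled prox error $2\delta_{pc,k}$ explicit before substituting $\delta_{pc,k}=\varepsilon/8$, whereas the paper folds it into the $\varepsilon$-budget directly.
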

{\it Proof}
As it follows from \eqref{eq:g_or_def}, if $M_k \geq L\left(\delta_{c,k}\right) =  L\left(\frac{\varepsilon}{4}\right)$, \eqref{eq:UMPCheck} holds. Thus, Algorithm \ref{Alg:UMP} is correctly defined.

Let us fix some iteration $k\geq 0$. For simplicity, we denote \\$\tilde{g}(z_k) = \tilde{g}(z_k,\delta_{c,k},\delta_u) $ and  $\tilde{g}(w_k) = \tilde{g}(w_k,\delta_{c,k},\delta_u)$.
By the definition of inexact prox-mapping \eqref{eq:InPrMap}-\eqref{eq:InPrMap1} and \eqref{eq:UMPwStep}, \eqref{eq:UMPzStep}, we have, for any $u\in Q$,
\begin{align}
& \langle \tilde{g}(z_k) + M_k\nabla d(w_k) - M_k \nabla d(z_k), u - w_k \rangle \geq - \delta_{pu} - \frac{\varepsilon}{8}, \label{eq:Th:UMPGenRatePr1} \\
&\langle \tilde{g}(w_k) + M_k\nabla d(z_{k+1}) - M_k \nabla d(z_k), u - z_{k+1} \rangle \geq - \delta_{pu} - \frac{\varepsilon}{8}. \label{eq:Th:UMPGenRatePr2}
\end{align}
Whence, for all $u\in Q$,
\begin{align*}
\langle \tilde{g}(w_k)&, w_k - u \rangle  =  \langle \tilde{g}(w_k) , z_{k+1} - u \rangle + \langle \tilde{g}(w_k) , w_k - z_{k+1}\rangle \\
& \hspace{-1em} \stackrel{\eqref{eq:Th:UMPGenRatePr2}}{\leq}   M_k \langle \nabla d(z_k) -  \nabla d(z_{k+1}), z_{k+1} - u \rangle + \langle \tilde{g}(w_k) , w_k - z_{k+1}\rangle + \delta_{pu} + \frac{\varepsilon}{8} \\
& \hspace{-1em} =  M_k(d(u) - d(z_k) - \langle \nabla d(z_k), u - z_k \rangle )
- M_k(d(u) - d(z_{k+1}) \\ 
& - \langle \nabla d(z_{k+1}), u - z_{k+1} \rangle) -M_{k}(d\left(z_{k+1}\right)-d\left(z_{k}\right) \\
& -\left\langle\nabla d\left(z_{k}\right), z_{k+1}-z_{k}\right\rangle) + \langle \tilde{g}(w_k) , w_k - z_{k+1}\rangle + \delta_{pu} + \frac{\varepsilon}{8} \\
& \hspace{-1em} =  M_k V[z_k](u)-M_k V[z_{k+1}](u) - M_k V[z_k](z_{k+1}) + \langle \tilde{g}(w_k) , w_k - z_{k+1}\rangle \\
& \hspace{-1em}+ \delta_{pu} + \frac{\varepsilon}{8}
\end{align*}
Further, for all $u\in Q$,
\begin{align*}
\langle \tilde{g}(w_k) , &w_k - z_{k+1}\rangle - M_k V[z_k](z_{k+1})   =  \langle \tilde{g}(w_k) - \tilde{g}(z_k) , w_k - z_{k+1}\rangle \\ 
&- M_k V[z_k](z_{k+1}) + \langle \tilde{g}(z_k) , w_k - z_{k+1}\rangle \\
& \hspace{-1em} \stackrel{\eqref{eq:Th:UMPGenRatePr1}}{\leq}   \langle \tilde{g}(w_k) - \tilde{g}(z_k) , w_k - z_{k+1}\rangle  + M_k \langle \nabla d(z_k) -  \nabla d(w_k), w_k - z_{k+1} \rangle  \\
& - M_k V[z_k](z_{k+1})   + \delta_{pu} + \frac{\varepsilon}{8}  \\
& \hspace{-1em} =  \langle \tilde{g}(w_k) - \tilde{g}(z_k) , w_k - z_{k+1}\rangle + M_k \langle \nabla d(z_k) -  \nabla d(w_k), w_k - z_{k+1} \rangle  \\
&   - M_k(d(z_{k+1})-d(z_k)-\langle \nabla d(z_k), z_{k+1} - z_k \rangle )   + \delta_{pu} + \frac{\varepsilon}{8}  \\
& \hspace{-1em} =  \langle \tilde{g}(w_k) - \tilde{g}(z_k) , w_k - z_{k+1}\rangle- M_k\big(d(w_k)-d(z_k)\\
&-\langle \nabla d(z_k), w_k - z_k \rangle \big) - M_k(d(z_{k+1}) - d(w_k)-\langle \nabla d(w_k), z_{k+1} - w_k \rangle ) \\
&+\delta_{pu} + \frac{\varepsilon}{8}  =  \langle \tilde{g}(w_k) - \tilde{g}(z_k) , w_k - z_{k+1}\rangle  - M_kV[z_k](w_k)\\
&-M_k V[w_k](z_{k+1}) + \delta_{pu} + \frac{\varepsilon}{8} \leq   \langle \tilde{g}(w_k) - \tilde{g}(z_k) , w_k - z_{k+1}\rangle  \\
&-\frac{M_k}{2}(\|z_k-w_k\|^2+\|z_{k+1}-w_k\|^2) + \delta_{pu} + \frac{\varepsilon}{8} \stackrel{\eqref{eq:UMPCheck}}{\leq} \frac{3\varepsilon}{8}+\delta_u + \delta_{pu},
\end{align*}
where we also used that $\delta_{c,k}=\e/4$.

%
Thus, we obtain, for all $u\in Q$ and $i \geq 0$,
$$
M_i^{-1} \langle \tilde{g}(w_i) , w_i - u \rangle \leq  V[z_i](u)-V[z_{i+1}](u) + \frac{M_i^{-1} \varepsilon}{2}+M_i^{-1} (\delta_u + 2 \delta_{pu}).
$$
Summing up these inequalities for $i$ from 0 to $k-1$, we have
\begin{equation*}
\frac{1}{\sum_{i=0}^{k-1}M_i^{-1}} \sum_{i=0}^{k-1} M_i^{-1} \langle \tilde{g}(w_i) , w_i - u \rangle \leq  \frac{1}{\sum_{i=0}^{k-1}M_i^{-1}}  (V[z_0](u)-  V[z_{k}](u))  +\frac{\varepsilon}{2}+\delta_u + 2 \delta_{pu}.
\end{equation*}
By \eqref{eq:g_or_deff}, we obtain the statement of the Theorem \ref{Th:UMPGenRate}.
\qed

Note that the compactness of the set $Q$ was not used in the proof.
\begin{Cor}\label{Cor:VIsol}
	Assume that $g(\cdot)$ and $\tilde{g}(\cdot,\delta_c,\delta_u)$ satisfy \eqref{eq:g_or_def} and \eqref{eq:g_or_deff}. 
	Also let $C\subseteq Q$ be a convex compact.
	Then, for all $k \geq 0$, we have
	\begin{equation}\label{eq:VIsolBound}
	{\rm Gap}_C(\widehat{w}_k) = \max_{u\in C}\langle g(u) , \widehat{w}_k - u \rangle \leq \frac{1}{\sum_{i=0}^{k-1}M_i^{-1}}  \max_{u\in Q}V[z_0](u) + \frac{\varepsilon}{2} +  \delta_u + 2 \delta_{pu},
	\end{equation}
	where
	$     \widehat{w}_k = \left(\sum_{i=0}^{k-1}M_i^{-1}\right)^{-1} \sum_{i=0}^{k-1} M_i^{-1} w_i$.
\end{Cor}
{\it Proof}
By monotonicity of $g$, we have, for all $i \geq 0$ and $u \in Q$,
$$
\langle g(u), w_i - u \rangle = \langle g(w_i), w_i - u \rangle + \langle g(u) - g(w_i), w_i - u \rangle \leq \langle g(w_i), w_i - u \rangle,
$$
Therefore, 
$
\left(\sum_{i=0}^{k-1}M_i^{-1}\right)^{-1}\sum_{i=0}^{k-1} M_i^{-1} \langle g(w_i) , w_i - u \rangle \geq \langle g(u), \widehat{w}_k - u \rangle
$ for any $u \in Q$.
Combining this with Theorem \ref{Th:UMPGenRate} and taking the maximum over all $u \in C$, we obtain the statement of the Corollary \ref{Cor:VIsol}. \qed

If a number $D$ satisfying $\max_{u \in C} V[z_0](u) \leq D$ is known, which is the case for most of the standard proximal setups \cite{nemirovski2004prox}, 
we can construct an adaptive stopping criterion: the algorithm stops whenever 
$
D\left(\sum_{i=0}^{k-1}M_i^{-1}\right)^{-1} \leq \varepsilon/2.
$
This guarantees that the r.h.s. of \eqref{eq:VIsolBound} is no greater than $\varepsilon+  \delta_u + 2 \delta_{pu}$ and $\widehat{w}_k$ is an $(\varepsilon+  \delta_u + 2 \delta_{pu})$-solution to \eqref{eq:PrSt}. 

Next, we consider the case of H\"older-continuous operator $g$ and show that  Algorithm \ref{Alg:UMP} is universal. For simplicity we assume that the prox-mapping is calculated exactly, i.e., $\delta_{pc}=\delta_{pu}=0$ and $\delta_u=0$. In this case, it is sufficient to set $\delta_{c,k} = \frac{\varepsilon}{2}$ at each iteration of Algorithm \ref{Alg:UMP}.
\qed

\begin{Cor}[Universal Method for VI]\label{Cor:UMPHoldUniv}
	Assume that the operator $g$ is H\"older-continuous with constant $L_{\nu}$ for some $\nu \in [0,1]$ and that in Algorithm \ref{Alg:UMP} we have $\delta_{c,k}=\e/2$, $\delta_{u}=0$, $\delta_{pc,k}=0$, $\delta_{pu}=0$. 
	Also let $C\subseteq Q$ be a convex compact.
	Then, for all $k \geq 0$, we have
	\begin{equation}\label{eq:UMPRate}
	{\rm Gap}_C(\widehat{w}_k) = \max_{u\in C} \langle g(u) , \widehat{w}_k - u \rangle \leq \frac{2 L_{\nu}^{\frac{2}{1+\nu}}}{ k \varepsilon^{\frac{1-\nu}{1+\nu}}} \max_{u\in C}V[z_0](u)   + \frac{\varepsilon}{2}.
	\end{equation}
\end{Cor}
{\it Proof}
As it follows from \eqref{eq:g_or_def}, if $M_k \geq L(\delta_{c,k}) L(\frac{\varepsilon}{2})$, \eqref{eq:UMPCheck} holds. Here $L(\cdot)$ is defined in Example \ref{Ex:HoldInexact}. Thus, for all\\ $i = 0, ..., k-1$, we have $M_i \leq 2\cdot L(\frac{\varepsilon}{2})$ and
$$
\frac{1}{\sum_{i=0}^{k-1}M_i^{-1}} \leq \frac{2L(\frac{\varepsilon}{2})}{k} = \frac{2 L_{\nu}^{\frac{2}{1+\nu}}}{ k\varepsilon^{\frac{1-\nu}{1+\nu}}}.
$$
Thus, \eqref{eq:UMPRate} follows from \eqref{eq:VIsolBound}. 
\qed

Let us make several comments on the universal method. Since Algorithm \ref{Alg:UMP} does not use the values of parameters $\nu$ and $L_{\nu}$, we  take the infinum w.r.t. $\nu \in [0,1]$ and obtain the following iteration complexity bound to find $\widehat{w}_k$ satisfying $\max_{u\in C} \langle g(u) , \widehat{w}_k - u \rangle \leq \varepsilon$:
$$
4 \inf_{\nu\in[0,1]}\left(\frac{L_{\nu}}{\varepsilon} \right)^{\frac{2}{1+\nu}} \cdot \max_{u\in C}V[z_0](u).
$$
Using the same reasoning as in \cite{nesterov2015universal}, we estimate the number of oracle calls for Algorithm \ref{Alg:UMP}. The number of oracle calls at each iteration $k$ is equal to $2i_k$, where by $i_k$ we mean the last value of $i_k$ at the end of the inner cycle. So, $M_k=2^{i_k-2}M_{k-1}$ and, hence, $i_k = 2+ \log_2\frac{M_k}{M_{k-1}}$.
Thus, the total number of oracle calls is
\begin{equation}\label{equiv_iter_2}
\sum\limits_{j = 0}^{k-1} i_{j} = 4k + 2\sum\limits_{i = 0}^{k-1}\log_{2} \frac{M_{j}}{M_{j-1}} < 4k + 2 \log_{2} \left(2L\left(\frac{\varepsilon}{2}\right)\right) - 2 \log_{2} (M_{-1}),
\end{equation}
where we used that $M_{k} \leq 2 L(\frac{\varepsilon}{2})$.
Hence, the total number of oracle calls of the Algorithm \ref{Alg:UMP} does not exceed
\begin{equation*}
\inf_{\nu\in[0,1]}\left(16\left(\frac{L_{\nu}}{\varepsilon} \right)^{\frac{2}{1+\nu}} \cdot \max_{u\in C}V[z_0](u) + 2\log_2 2\left(\left(\frac{1}{\varepsilon}\right)^{\frac{1-\nu}{1+\nu}} L_{\nu}^{\frac{2}{1+\nu}}\right)\right)- 2\log_2(M_{-1}).
\end{equation*}

Next, we compare our algorithm with the algorithm in \cite{bach2019universal} which appeared after the first version of this paper appeared as an arxiv preprint. Their algorithm uses AdaGrad-type of stepsizes and, denoting $D^2 \geq \max_{u\in C}V[z_0](u)$, for the non-smooth case $\nu=0$, achieves, up to logarithmic factors, complexity of $O\left(L_0^2D^2\e^{-2}\right)$, which is similar to ours. For the smooth case $\nu=1$ it achieves complexity $O\left((L_0D+L_1D^2)\e^{-1}\right)$, which is similar to ours. Unlike our paper, they consider only two extreme cases $\nu\in\{0,1\}$, but also consider stochastic setting. At the same time, for our algorithm, $1/M_k$ plays the role of stepsize at iteration $k$, and from \eqref{eq:VIsolBound} it is clear that, the smaller $M_k$, the smaller is the right hand side and the better is the convergence guarantee. Step 5 of our algorithm ensures that the stepsize may decrease in the course of the algorithm execution, leading to a better performance in practice. This is in contrast to the stepsize in \cite{bach2019universal} which is a decreasing sequence. 
The experiments in \cite[Appendix 7]{stonyakin2020inexact} show that decreasing $M_k$ (which is equivalent to increasing the stepsize) allows to obtain much faster convergence.

\section{Solving Variational Inequalities with Strongly Monotone Operator}\label{Sec_strongly_VI}
In this section, we assume, that the operator $g$ in \eqref{eq:PrSt} is strongly monotone, which means that, for some $\mu > 0$,
\begin{equation}
\label{eq:gStrMonDef}
\langle g(x) - g(y), x - y\rangle \geq \mu \|x-y\|^2 \quad \forall x, y \in Q.
\end{equation}
We slightly modify the assumptions on the prox-function $d$. Namely, we assume that $0 = \arg \min_{x \in Q} d(x)$ and that $d$ is bounded on the unit ball in the chosen norm $\|\cdot\|$, that is
\begin{equation}
d(x) \leq \frac{\Omega}{2}, \quad \forall x\in Q : \|x \| \leq 1,
\label{eq:dUpBound}
\end{equation}
where $\Omega$ is some known constant. Note that for standard proximal setups, $\Omega = O(\ln \text{dim}E)$ \cite{juditskyrestarts}. Finally, we assume that we are given a starting point $x_0 \in Q$ and a number $R_0 >0$ such that $\| x_0 - x_* \|^2 \leq R_0^2$, where $x_*$ is the solution to \eqref{eq:PrSt}. 
We show that the well-known in optimization restart technique  \cite{nesterov1983method,juditskyrestarts,dvurechensky2016stochastic} also works in the context of VIs. To the best of our knowledge, this is the first time when this technique is applied to VIs. The resulting Restarted Generalized Mirror Prox algorithm is listed below as Algorithm \ref{Alg:RUMP}.

\begin{algorithm}[ht]
	\caption{Generalized Mirror Prox with restarts}
	\label{Alg:RUMP}
	\begin{algorithmic}[1]
		\REQUIRE accuracy $\varepsilon > 0$, $\delta_u >0$, $\delta_{pu} >0$, $\mu >0$, $\Omega$ such that $d(x) \leq \frac{\Omega}{2} \ \forall x\in Q: \|x\| \leq 1$; $x_0, R_0$ such that $\|x_0-x_*\|^2 \leq R_0^2.$
		\STATE Set $p=0,d_0(x)=R_0^2d\left(\frac{x-x_0}{R_0}\right)$.
		\REPEAT
		\STATE Set $x_{p+1}$ as the output of Algorithm \ref{Alg:UMP} for monotone case with accuracy $\mu\varepsilon/2$, $\delta_u$, $\delta_{pu}$, prox-function $d_{p}(\cdot)$ and stopping criterion $\sum_{i=0}^{k-1}M_i^{-1}\geq \frac{\Omega}{\mu}$.
		\STATE Set $R_{p+1}^2 = R_0^2 \cdot 2^{-(p+1)} + 2(1-2^{-(p+1)})(\frac{\varepsilon}{4}+\delta_u+ 2 \delta_{pu})$.
		\STATE Set $d_{p+1}(x) \leftarrow R_{p+1}^2d\left(\frac{x-x_{p+1}}{R_{p+1}}\right)$.
		\STATE Set $p=p+1$.
		\UNTIL			
		$p > \log_2\frac{2R_0^2}{\varepsilon}.$	
		\ENSURE $x_p$.
	\end{algorithmic}
\end{algorithm}

\begin{Th}\label{Th:RUMPCompl}
	Assume that $g$ is strongly monotone with parameter $\mu$. Also assume that the prox-function $d$ satisfies \eqref{eq:dUpBound} and the starting point $x_0 \in Q$ and a number $R_0 >0$ are such that $\| x_0 - x_* \|^2 \leq R_0^2$, where $x_*$ is the solution to \eqref{eq:PrSt}. Then, for $p\geq 0$, the sequence $x_p$ generated by Algorithm \ref{Alg:RUMP} satisfies
	$$
	\|x_p - x_*\|^2 \leq R_0^2\cdot 2^{-p} + \frac{\varepsilon}{2} + \frac{2\delta_u + 4 \delta_{pu}}{\mu},
	$$
	and the point $x_p$ returned by  Algorithm \ref{Alg:RUMP} satisfies $\|x_p-x_*\|^2\leq \varepsilon + \frac{2\delta_u + 4 \delta_{pu}}{\mu}$.
\end{Th}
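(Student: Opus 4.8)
The plan is to argue by induction on the restart counter $p$, carrying the invariant $\|x_p-x_*\|^2\le R_p^2$ and then reading both assertions off the closed form of $R_p^2$. The base case $p=0$ is precisely the standing hypothesis $\|x_0-x_*\|^2\le R_0^2$. Before the inductive step I would record the elementary identity
\[
R_{p+1}^2=\tfrac12 R_p^2+\Big(\tfrac{\varepsilon}{4}+\delta_u+2\delta_{pu}\Big),
\]
which follows from $2-2^{-p}=2\,(1-2^{-(p+1)})$; hence it suffices to prove the one-restart contraction $\|x_{p+1}-x_*\|^2\le\tfrac12 R_p^2+\tfrac{\varepsilon}{4}+\tfrac{\delta_u+2\delta_{pu}}{\mu}$, after which unrolling the geometric recursion and summing $\sum_j 2^{-j}\le 2$ produces $\|x_p-x_*\|^2\le R_0^2\,2^{-p}+\tfrac{\varepsilon}{2}+\tfrac{2\delta_u+4\delta_{pu}}{\mu}$, the first claim.

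The core is the analysis of a single restart, i.e.\ of the inner call to \cref{Alg:UMP} run with prox-function $d_p$, accuracy $\mu\varepsilon/2$ and stopping rule $\sum_{i=0}^{k-1}M_i^{-1}\ge\Omega/\mu$. Denoting by $V_p$ the Bregman divergence of $d_p$ and applying \cref{Th:UMPGenRate} to this run at the test point $u=x_*$ gives, after discarding the nonpositive term $-V_p[z_k](x_*)$ and substituting the accuracy $\mu\varepsilon/2$,
\[
\frac{1}{\sum_{i}M_i^{-1}}\sum_{i}M_i^{-1}\langle g(w_i),w_i-x_*\rangle\le\frac{V_p[x_p](x_*)}{\sum_{i}M_i^{-1}}+\frac{\mu\varepsilon}{4}+\delta_u+2\delta_{pu}.
\]
Now I would bring in strong monotonicity \cref{eq:gStrMonDef}: since $x_*$ solves the strong VI we have $\langle g(x_*),w_i-x_*\rangle\ge 0$, whence $\langle g(w_i),w_i-x_*\rangle\ge\mu\|w_i-x_*\|^2$; together with convexity of $u\mapsto\|u-x_*\|^2$ (Jensen's inequality for the weights $M_i^{-1}/\sum_j M_j^{-1}$ defining $\widehat w_k=x_{p+1}$) this turns the left-hand side into the lower bound $\mu\|x_{p+1}-x_*\|^2$, so that
\[
\mu\|x_{p+1}-x_*\|^2\le\frac{V_p[x_p](x_*)}{\sum_{i}M_i^{-1}}+\frac{\mu\varepsilon}{4}+\delta_u+2\delta_{pu}.
\]

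It then remains to control the leading term. Since $x_p=\argmin_{x\in Q} d_p(x)$, optimality gives $V_p[x_p](x_*)\le d_p(x_*)=d\big(\tfrac{x_*-x_p}{R_p}\big)$; the induction hypothesis $\|x_*-x_p\|\le R_p$ places $\tfrac{x_*-x_p}{R_p}$ in the unit ball, so \cref{eq:dUpBound} yields $V_p[x_p](x_*)\le\Omega/2$. Feeding this together with the stopping rule $\sum_i M_i^{-1}\ge\Omega/\mu$ into the previous display bounds the leading term by the required fraction of $R_p^2$ and produces the contraction factor $\tfrac12$, i.e.\ $\|x_{p+1}-x_*\|^2\le\tfrac12 R_p^2+\tfrac{\varepsilon}{4}+\tfrac{\delta_u+2\delta_{pu}}{\mu}$, which closes the induction. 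For the second assertion I would use the loop's exit test $p>\log_2(2R_0^2/\varepsilon)$: it forces $R_0^2\,2^{-p}<\varepsilon/2$, and combined with the first claim this gives $\|x_p-x_*\|^2<\varepsilon+\tfrac{2\delta_u+4\delta_{pu}}{\mu}$.

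The step I expect to be the main obstacle is exactly the passage from the constant bound $V_p[x_p](x_*)\le\Omega/2$ to a genuine multiple of $R_p^2$ with constant $\tfrac12$. The $1/R_p$ rescaling built into $d_p$ makes $d_p$ only $1/R_p^2$-strongly convex and correspondingly rescales the step-sizes $M_i$ and the step-size test \cref{eq:UMPCheck}; all of these $R_p$-factors must be tracked consistently so that the stopping rule $\sum_i M_i^{-1}\ge\Omega/\mu$ delivers precisely the factor $\tfrac12$ rather than some other $R_p$-dependent constant, and so that \cref{Th:UMPGenRate} legitimately applies to the rescaled inner problem. The Euclidean setup, where $V_p[x_p](x_*)=\tfrac{1}{2R_p^2}\|x_p-x_*\|^2$ makes the contraction transparent, is the guide for checking that this bookkeeping comes out with the advertised constants.
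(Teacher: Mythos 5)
Your route is the same as the paper's (induction over restarts; \cref{Th:UMPGenRate} at $u=x_*$ with accuracy $\mu\varepsilon/2$; the bound $\langle g(w_i),w_i-x_*\rangle\ge\mu\|w_i-x_*\|^2$ from strong monotonicity plus the strong-solution property; Jensen for the squared norm; the stopping rule $\sum_i M_i^{-1}\ge\Omega/\mu$), but the step you yourself flag as ``the main obstacle'' is a genuine gap, and as written your inequalities do not close the induction. Write $\Delta:=\tfrac{\varepsilon}{4}+\tfrac{\delta_u+2\delta_{pu}}{\mu}$. With the literal normalization $d_p(x)=d\bigl((x-x_p)/R_p\bigr)$ you correctly obtain only the constant bound $V_p[x_p](x_*)\le\Omega/2$, and then the stopping rule gives
\[
\mu\|x_{p+1}-x_*\|^2\le\frac{\Omega/2}{\Omega/\mu}+\mu\Delta,
\qquad\text{i.e.}\qquad
\|x_{p+1}-x_*\|^2\le\frac12+\Delta,
\]
a constant rather than $\tfrac12 R_p^2+\Delta$; nothing contracts. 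Moreover, this $d_p$ is only $1/R_p^2$-strongly convex w.r.t.\ $\|\cdot\|$, so \cref{Th:UMPGenRate} does not even apply to the inner run without modification --- exactly the two difficulties you list but do not resolve.

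The missing idea, and what the paper's proof actually does, is to take the restart prox-function with an extra factor $R_p^2$, namely $d_p(x)=R_p^2\,d\bigl((x-x_p)/R_p\bigr)$; the display in \cref{Alg:RUMP} omits this factor, but the proof uses it, writing $d_p(x_*)=R_p^2\,d\bigl((x_*-x_p)/R_p\bigr)$. This factor exactly cancels the $1/R_p^2$ loss of strong convexity, so $d_p$ is $1$-strongly convex w.r.t.\ the original norm and \cref{Th:UMPGenRate} applies verbatim, with no re-tracking of the $M_i$ or of the test \cref{eq:UMPCheck}; and on the ball $\|x_*-x_p\|\le R_p$ guaranteed by the induction hypothesis, \cref{eq:dUpBound} now gives $V_p[x_p](x_*)\le d_p(x_*)\le\Omega R_p^2/2$, so the stopping rule yields $\frac{\Omega R_p^2/2}{\Omega/\mu}=\frac{\mu R_p^2}{2}$ and hence the true contraction $\|x_{p+1}-x_*\|^2\le\tfrac12 R_p^2+\Delta$. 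A second, smaller bookkeeping point: your recursion identity uses the increment $\tfrac{\varepsilon}{4}+\delta_u+2\delta_{pu}$ (the algorithm's line for $R_{p+1}^2$), while the contraction you prove has the $\mu$-divided increment $\Delta$; for the unrolling, and for $\|x_*-x_p\|\le R_p$ to be available at the next restart, these must agree, so the radii should be defined with $\Delta$ throughout, as in the paper's induction hypothesis $\|x_p-x_*\|^2\le R_0^2\,2^{-p}+2(1-2^{-p})\Delta$.
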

{\it Proof} 
Let us denote $\Delta = \frac{\varepsilon}{4} + \frac{\delta_u + 2 \delta_{pu}}{\mu}$. We show by induction that, for $p \geq 0$,
$$
\|x_p - x_*\|^2 \leq R_0^2\cdot 2^{-p} + 2(1-2^{-p})\Delta,
$$
which leads to the statement of the Theorem \ref{Th:RUMPCompl}. For $p=0$ this inequality holds by the theorem assumption. Assuming that it holds for some $p\geq 0$, our goal is to prove it for $p+1$ considering the outer iteration $p+1$. Observe that the function $d_{p}$ defined in Algorithm \ref{Alg:RUMP} is 1-strongly convex w.r.t. the norm $\|\cdot\|$.
Using the definition of $d_{p}(\cdot)$ and \eqref{eq:dUpBound}, we have, since $x_p = \arg \min_{x \in Q} d_p(x)$ and $\|x_p-x_*\|\leq R_p$
\begin{equation*}
V_{p}[x_{p}](x_*) = d_{p}(x_{*}) - d_{p}(x_{p}) - \langle \nabla d_{p}(x_{p}), x_{*} - x_{p} \rangle \leq  d_{p}(x_{*})  = R_p^2d\left(\frac{x_{*}-x_p}{R_p}\right) \leq \frac{\Omega R_p^2}{2}.
\end{equation*}
Thus, by  Theorem \ref{Th:UMPGenRate}, taking $u = x_*$, we obtain

\begin{equation*}
\frac{1}{\sum_{i=0}^{k-1}M_i^{-1}} \sum_{i=0}^{k-1} M_i^{-1} \langle g(w_i) , w_i - x_{*} \rangle \leq  \frac{V_{p}[x_{p}](x_{*})}{\sum_{i=0}^{k-1}M_i^{-1}}   + \frac{\mu\varepsilon}{4} + \delta_u + 2 \delta_{pu}  
\leq \frac{\Omega R_p^2}{2\sum_{i=0}^{k-1}M_i^{-1}} + \mu \Delta.
\end{equation*}  

Since the operator $g$ is continuous and monotone, the solution of the Minty VI \eqref{eq:PrSt} is also the solution of the Stampacchia variational inequality \cite{nesterov2007dual,facchinei2007finite}, i.e.,
$\langle g(x_*), x_* - w_i \rangle \leq 0, \quad i=0,...,k-1$.
This and the strong monotonicity of $g$, see \eqref{eq:gStrMonDef}, give, for all $i=0,...,k-1$,
$$
\langle g(w_i) , w_i - x_*\rangle \geq  \langle g(w_i) - g(x_*) , w_i-x_*\rangle \geq \mu\|w_i-x_*\|^2.
$$
Thus, by convexity of the squared norm, we obtain
\begin{align*}
\mu \|x_{p+1}-x_*\|^2 &= \mu \left\|\frac{1}{\sum_{i=0}^{k-1}M_i^{-1}} \sum_{i=0}^{k-1} M_i^{-1} w_i-x_*\right\|^2 \hspace{-0.6em}
\\&
\leq\frac{\mu}{\sum_{i=0}^{k-1}M_i^{-1}} \sum_{i=0}^{k-1} M_i^{-1} \|w_i-x_*\|^2   \\
& \leq \frac{1}{\sum_{i=0}^{k-1}M_i^{-1}} \sum_{i=0}^{k-1} M_i^{-1} \langle g(w_i) , w_i - x_{*} \rangle \leq \frac{\Omega R_p^2}{2\sum_{i=0}^{k-1}M_i^{-1}} + \mu \Delta. 
\end{align*}
Using the stopping criterion $\sum_{i=0}^{k-1} M_i^{-1} \geq \frac{\Omega}{\mu}$, we get
\begin{equation*}
\|x_{p+1}-x_*\|^2 \leq \frac{R_p^2}{2} + \Delta = \frac{1}{2} (R_0^2 \cdot 2^{-p} + 2(1-2^{-p})\Delta ) +\Delta  =
 R_0^2 \cdot 2^{-(p+1)} + 2(1-2^{-(p+1)})\Delta,
\end{equation*}
which finishes the induction proof.
\qed

\begin{Cor}\label{Cor:RUMPHoldUniv}
	Assume that the operator $g$ is H\"older-continuous with constant $L_{\nu}$ for some $\nu \in [0,1]$ and strongly monotone with parameter $\mu$. Then, Algorithm \ref{Alg:RUMP} returns a point $x_p$ such that $\|x_p-x_*\|^2\leq \varepsilon + \frac{2\delta_u+4\delta_{pu}}{\mu}$ and the total number of iterations of the inner Algorithm \ref{Alg:UMP} does not exceed
	\begin{equation}\label{eq_string_monot}
	\inf_{\nu \in [0,1]} \left\lceil  \left(\frac{L_{\nu}}{\mu}\right)^{\frac{2}{1+\nu}}\frac{2^{\frac{2}{1+\nu}}\Omega }{\varepsilon^{\frac{1-\nu}{1+\nu}}} \cdot \log_2 \frac{2 R_0^2}{\varepsilon}\right\rceil.
	\end{equation}
\end{Cor}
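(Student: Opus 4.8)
The plan is to split \cref{Cor:RUMPHoldUniv} into two nearly independent claims: an accuracy claim, which is just \cref{Th:RUMPCompl} specialized to the H\"older class, and a complexity claim, which carries all of the work. For the accuracy part, I would first invoke \cref{Ex:Hold} to see that a H\"older-continuous operator satisfies \cref{Def:IO} with $\delta_u = 0$ and $L(\delta_c) = \left(\frac{1}{2\delta_c}\right)^{\frac{1-\nu}{1+\nu}} L_{\nu}^{\frac{2}{1+\nu}}$, so that the hypotheses of \cref{Th:RUMPCompl} are met. Its final assertion then immediately yields $\|x_p - x_*\|^2 \le \varepsilon + \frac{2\delta_u + 4\delta_{pu}}{\mu}$, and nothing further is needed for this half.

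For the complexity claim I would count inner iterations one restart at a time and then sum. Fix an outer index $p$; there \cref{Alg:UMP} is run with accuracy $\varepsilon' := \mu\varepsilon/2$ and with the rescaled prox-function $d_p$, which (as observed in the proof of \cref{Th:RUMPCompl}) is again $1$-strongly convex with respect to $\|\cdot\|$. Following the H\"older convention of \cref{Cor:UMPHoldUniv}, take the controlled error $\delta_c = \varepsilon'/2 = \mu\varepsilon/4$. The same line-search argument as in \cref{Th:UMPGenRate} and \cref{Cor:UMPHoldUniv} — the stopping test \cref{eq:UMPCheck} is guaranteed once $M_k \ge L(\delta_c)$, and $M_k$ is at most doubled before this happens — yields the uniform bound $M_i \le 2L(\mu\varepsilon/4) = 2\left(\frac{2}{\mu\varepsilon}\right)^{\frac{1-\nu}{1+\nu}} L_{\nu}^{\frac{2}{1+\nu}}$ for every inner index $i$ and every $p$. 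Hence $M_i^{-1} \ge \left(2L(\mu\varepsilon/4)\right)^{-1}$, so the inner stopping rule $\sum_{i=0}^{k-1} M_i^{-1} \ge \Omega/\mu$ must fire after at most $\left\lceil \frac{\Omega}{\mu}\cdot 2L(\mu\varepsilon/4)\right\rceil$ inner iterations; collecting the powers of $2$ and $\mu$ rewrites this per-restart count as $\left(\frac{L_{\nu}}{\mu}\right)^{\frac{2}{1+\nu}} \frac{2^{\frac{2}{1+\nu}}\Omega}{\varepsilon^{\frac{1-\nu}{1+\nu}}}$.

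To conclude, the outer loop of \cref{Alg:RUMP} halts once $p > \log_2 \frac{2R_0^2}{\varepsilon}$, so it performs at most $\lceil \log_2\frac{2R_0^2}{\varepsilon}\rceil$ restarts; because the per-restart bound above does not depend on $p$, writing the total as the per-restart count times the number of restarts and then taking $\inf_{\nu\in[0,1]}$ (the method uses neither $\nu$ nor $L_{\nu}$, so the estimate holds simultaneously for every admissible pair) gives \cref{eq_string_monot}. I expect the main obstacle to be bookkeeping rather than anything conceptual: one must carry the argument of $L(\cdot)$ correctly through the rescaling of the target accuracy ($\varepsilon \mapsto \mu\varepsilon/2$) and confirm that neither the restart-dependent rescaling $d_p(x) = d\!\left(\frac{x-x_p}{R_p}\right)$ nor the re-initialization of $M_{-1}$ at each restart destroys the uniform bound $M_i \le 2L(\mu\varepsilon/4)$. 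That uniformity is precisely what makes the step of summing over restarts legitimate, and it rests on the fact that the oracle inequality \cref{eq:g_or_def}, and therefore $L(\delta_c)$, involves only the operator and the fixed norm and is thus insensitive to the choice of prox-function.
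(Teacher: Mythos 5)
Your proposal is correct and takes essentially the same route as the paper's proof: the uniform line-search bound $M_i \le 2L(\mu\varepsilon/4) = 2\left(\frac{2}{\mu\varepsilon}\right)^{\frac{1-\nu}{1+\nu}}L_{\nu}^{\frac{2}{1+\nu}}$, the inner stopping rule $\sum_{i} M_i^{-1} \ge \Omega/\mu$ giving a per-restart count of $\left\lceil \left(\frac{L_{\nu}}{\mu}\right)^{\frac{2}{1+\nu}}\frac{2^{\frac{2}{1+\nu}}\Omega}{\varepsilon^{\frac{1-\nu}{1+\nu}}}\right\rceil$ iterations, multiplication by the $\left\lceil \log_2 \frac{2R_0^2}{\varepsilon}\right\rceil$ restarts, and the infimum over $\nu$ justified by the algorithm not using $\nu$ or $L_{\nu}$. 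Your closing observation — that $L(\delta_c)$ depends only on the operator and the fixed norm, so neither the rescaled prox-functions $d_p$ nor the re-initialization of $M_{-1}$ across restarts breaks the uniform bound on $M_i$ — makes explicit a point the paper leaves implicit (it simply cites its Corollary 2), but the argument is the same.
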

{\it Proof}
Let us denote $\hat{p} =  \left\lceil \log_2 \frac{2 R_0^2}{\varepsilon}\right\rceil$. As it was shown in Corollary \ref{Cor:UMPHoldUniv}, at each inner iteration, $M_i \leq 2L(\frac{\mu\varepsilon}{4}) = 2\left(\frac{2}{\mu\varepsilon}\right)^{\frac{1-\nu}{1+\nu}} L_{\nu}^{\frac{2}{1+\nu}}$. Thus, by the stopping criterion $\sum_{i=0}^{k-1} M_i^{-1} \geq \frac{\Omega}{\mu}$, the inner cycle stops at the latest when
$$
k_p = \left\lceil \left(\frac{L_{\nu}}{\mu}\right)^{\frac{2}{1+\nu}}\frac{2^{\frac{2}{1+\nu}}\Omega }{\varepsilon^{\frac{1-\nu}{1+\nu}}}\right\rceil
$$
and we have
\begin{equation*}
N = \sum_{p=1}^{\hat{p}} k_p \leq \left\lceil  \left(\frac{L_{\nu}}{\mu}\right)^{\frac{2}{1+\nu}}\frac{2^{\frac{2}{1+\nu}}\Omega }{\varepsilon^{\frac{1-\nu}{1+\nu}}} \cdot \log_2 \frac{2 R_0^2}{\varepsilon}\right\rceil. 
\end{equation*}
Since Algorithm \ref{Alg:UMP} does not need to know $\nu$ and $L_{\nu}$, we can take the infimum w.r.t. $\nu \in [0,1]$.
\qed
The obtained complexity estimate is optimal for the case $\nu = 1$ \cite{zhang2019lower} and is optimal up to a logarithmic factor for the case $\nu = 0$, \cite{nemirovsky1983problem}. For the intermediate case $\nu \in (0,1)$ we are not aware of any lower bounds. 
As a remark, we note that the complexity estimate for the case $\nu=0$ is $O\left(\frac{L_0^2}{\mu^2\varepsilon}\right)$, whereas one would expect $O\left(\frac{L_0^2}{\mu\varepsilon}\right)$. The reason is that we measure the error in terms of the distance to the solution $\|x_p- x_*\|$, but not in terms of the residual in  VI, i.e. $\max_{u \in Q} \langle g(u), x_p - u \rangle$, as in Corollary \ref{Cor:UMPHoldUniv}.

\section{Applications to Saddle Point Problems}\label{S:saddle_point}
In this section, we consider saddle point problems and show, how Generalized Mirror Prox can be applied to such problems. The problem, we consider is
\begin{equation}\label{eq:SadPointPrSt}
f^*=\min\limits_{u\in Q_1}\max\limits_{v\in Q_2}f(u,v),
\end{equation}
where $Q_1 \subset E_1$ and $Q_2 \subset E_2$ are convex and closed subsets of normed spaces $E_1$ and $E_2$ with norms $\|\cdot\|_1$ and $\|\cdot\|_2$, respectively.
Based on the norms in $E_{1}$ and $E_{2}$, we define the norm on their product $E_1\times E_2$ as\\ $\|x\|=\max\{\|u\|_1,\|v\|_2\}$, $x=(u,v)\in E_1\times E_2$ with the corresponding dual norm $\|s\|_*=\|z\|_{1,*}+\|w\|_{2,*}$, $s=(z,w)\in E^*$,
where $\|\cdot\|_{1,*}$ and $\|\cdot\|_{2,*}$ are the norms on the conjugate spaces  $E^*_1$ and $E^*_2$, dual to $\|\cdot\|_1$ and $\|\cdot\|_2$ respectively.

The function $f$ in \eqref{eq:SadPointPrSt} is assumed to be convex in $u$ and concave in $v$. As it is usually done, we consider the operator
\begin{equation}\label{eq:gSadPointDef}
g(x)=
\begin{pmatrix}
\nabla_u f(u,v)\\
-\nabla_v f(u,v)
\end{pmatrix}, \; x=(u,v) \in Q:=Q_1 \times Q_2.
\end{equation}
By the convexity of $f$ in $u$ and the concavity in $v$, the operator $g$  is monotone
\begin{equation} 
\langle g(x)-g(y),x-y\rangle \geq 0 \quad \forall x,y\in Q\subset E,
\end{equation}
where $x=(u_1,v_1), y=(u_2,v_2)$.



The following lemma gives sufficient conditions for $g$ to be H\"older-continuous, i.e., to satisfy \eqref{eq:GHoeldDef}.
\begin{Lm}\label{Lm:HoeldfToHoeldG}
	Assume that for $f$ in \eqref{eq:SadPointPrSt} there exist a  number \\ $\nu \in [0,1]$ and constants $L_{11,\nu},L_{12,\nu},L_{21,\nu},L_{22,\nu} < +\infty$ such that
	\begin{equation}\label{eq:Sadle_Hold1}
	\|\nabla_u f(u+\Delta u,v+\Delta v)-\nabla_u f(u,v)\|_{1,*}\leq L_{11,\nu}\| \Delta u\|_{1}^{\nu}+L_{12,\nu}\| \Delta v\|_{2}^{\nu},
	\end{equation}
	\begin{equation}\label{eq:Sadle_Hold2}
	\|\nabla_v f(u+\Delta u,v+\Delta v)-\nabla_v f(u,v)\|_{2,*}\leq L_{21,\nu}\| \Delta u\|_{1}^{\nu}+L_{22,\nu}\| \Delta v\|_{2}^{\nu}
	\end{equation}
	for all $u, u+\Delta u\in Q_1, v, v+\Delta v \in Q_2$. Then $g$ defined in \eqref{eq:gSadPointDef} is H\"older-continuous, i.e., satisfies \eqref{eq:GHoeldDef} with the same $\nu$ and $$L_{\nu} = L_{11,\nu}+L_{12,\nu}+L_{21,\nu}+L_{22,\nu}.$$
\end{Lm}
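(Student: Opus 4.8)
The plan is to unwind the definition of the dual norm on the product space $E^*$ and to apply the two Hölder assumptions componentwise. Fix $x=(u_1,v_1)$ and $y=(u_2,v_2)$ in $Q=Q_1\times Q_2$, and set $\Delta u = u_1-u_2$, $\Delta v = v_1-v_2$. By the definition of $g$ in \cref{eq:gSadPointDef} together with the dual norm $\|s\|_*=\|z\|_{1,*}+\|w\|_{2,*}$ on $E^*$, we have
\begin{equation*}
\|g(x)-g(y)\|_* = \|\nabla_u f(u_1,v_1)-\nabla_u f(u_2,v_2)\|_{1,*} + \|\nabla_v f(u_1,v_1)-\nabla_v f(u_2,v_2)\|_{2,*},
\end{equation*}
where the minus sign in the second component of $g$ plays no role since we take a norm.

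Next I would apply the Hölder hypotheses \cref{eq:Sadle_Hold1} and \cref{eq:Sadle_Hold2} with base point $(u_2,v_2)$ and increments $\Delta u,\Delta v$, and add the two resulting bounds to obtain
\begin{equation*}
\|g(x)-g(y)\|_* \leq (L_{11,\nu}+L_{21,\nu})\,\|\Delta u\|_1^{\nu} + (L_{12,\nu}+L_{22,\nu})\,\|\Delta v\|_2^{\nu}.
\end{equation*}
The final step invokes the definition of the norm on $E_1\times E_2$, namely $\|x-y\|=\max\{\|\Delta u\|_1,\|\Delta v\|_2\}$, so that $\|\Delta u\|_1\leq\|x-y\|$ and $\|\Delta v\|_2\leq\|x-y\|$ hold simultaneously. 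Since $t\mapsto t^{\nu}$ is nondecreasing for $\nu\in[0,1]$, this gives $\|\Delta u\|_1^{\nu}\leq\|x-y\|^{\nu}$ and $\|\Delta v\|_2^{\nu}\leq\|x-y\|^{\nu}$; collecting the four constants then yields
\begin{equation*}
\|g(x)-g(y)\|_* \leq (L_{11,\nu}+L_{12,\nu}+L_{21,\nu}+L_{22,\nu})\,\|x-y\|^{\nu} = L_{\nu}\|x-y\|^{\nu},
\end{equation*}
which is exactly \cref{eq:GHoeldDef}.

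There is no genuine obstacle in this argument; it is a direct computation. The only thing worth tracking carefully is the interplay of the two norms on the product space: the \emph{additive} structure of the dual norm $\|\cdot\|_*$ is what produces the sum $L_{11,\nu}+L_{12,\nu}+L_{21,\nu}+L_{22,\nu}$, whereas the \emph{max} structure of the primal norm $\|\cdot\|$ is precisely what allows both mixed terms $\|\Delta u\|_1^{\nu}$ and $\|\Delta v\|_2^{\nu}$ to be dominated by the single quantity $\|x-y\|^{\nu}$. Matching these two pieces of bookkeeping correctly is all that the proof requires.
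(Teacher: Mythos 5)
Your proof is correct and follows essentially the same route as the paper's: decompose $\|g(x)-g(y)\|_*$ via the additive dual norm, apply the two H\"older bounds componentwise, and absorb both $\|\Delta u\|_1^{\nu}$ and $\|\Delta v\|_2^{\nu}$ into $\|x-y\|^{\nu}$ using the max structure of the primal norm (the paper phrases this last step as $\max\{\|\Delta u\|_1^{\nu},\|\Delta v\|_2^{\nu}\}=\|x-y\|^{\nu}$, which is the same monotonicity fact you invoke). No gaps; your remark about the bookkeeping between the additive dual norm and the max primal norm is exactly the point of the computation.
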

{\it Proof} 
Indeed, for each $x=(u_1,v_1), y=(u_2,v_2) \in Q$ we have:
\begin{align*}
\| g(x)-g(y)\|_* &=  \| \nabla_u f(u_1,v_1)-\nabla_u f(u_2,v_2)\|_{1,*} +\| \nabla_v f(u_1,v_1)-\nabla_v f(u_2,v_2)\|_{2,*} \\&
 \leq L_{11,\nu}\|u_1-u_2\|_1^{\nu}  +L_{12,\nu}\|v_1-v_2\|_2^{\nu}+L_{21,\nu}\|u_1-u_2\|_1^{\nu}+L_{22,\nu}\|v_1-v_2\|_2^{\nu} \\& =(L_{11,\nu}+L_{21,\nu})\|u_1-u_2\|_1^{\nu}+(L_{12,\nu}+L_{22,\nu})\|v_1-v_2\|_2^{\nu}\\&
 \leq(L_{11,\nu}+L_{12,\nu}+L_{21,\nu}+L_{22,\nu}) \max\{\|u_1-u_2\|_1^{\nu},\|v_1-v_2\|_2^{\nu}\}\\&
 =(L_{11,\nu}+L_{12,\nu}+L_{21,\nu}+L_{22,\nu})\|x-y\|^{\nu}.
\qquad \qquad \qquad \qquad \qquad \qquad \quad \qed
\end{align*} 

\begin{Rem}
	As an alternative, one can consider the following primal and dual pair of norms for $E = E_1\times E_2$: $\|x\|=\sqrt{\|u\|_1^2+\|v\|_2^2}$, $x=(u,v)\in E_1\times E_2$, and $\|s\|_*= \sqrt{\|z\|_{1,*}^2+\|w\|_{2,*}^2}$, $s=(z,w)\in E^*$,
	where $\|\cdot\|_{1,*}$ and $\|\cdot\|_{2,*}$ are the norms on the conjugate spaces  $E^*_1$ and $E^*_2$, dual to $\|\cdot\|_1$ and $\|\cdot\|_2$ respectively. 
	We have, for each $x=(u_1,v_1), y=(u_2,v_2) \in Q$,
	\begin{align*}
	\| g(x)-g(y)\|^2_* & = \| \nabla_u f(u_1,v_1)-\nabla_u f(u_2,v_2)\|_{1,*}^2 +\| \nabla_v f(u_1,v_1)-\nabla_v f(u_2,v_2)\|_{2,*}^2 \\&
	\leq 2\big(L_{11,\nu}^2\|u_1-u_2\|_1^{2\nu} +L_{12,\nu}^2\|v_1-v_2\|_2^{2\nu} + L_{21,\nu}^2\|u_1-u_2\|_1^{2\nu} + L_{22,\nu}^2\|v_1-v_2\|_2^{2\nu}\big)
	\\&
	=2(L_{11,\nu}^2+L_{21,\nu}^2)\|u_1-u_2\|_1^{2\nu} + 2(L_{12,\nu}^2+L_{22,\nu}^2)\|v_1-v_2\|_2^{2\nu}
	\\&
	\leq 2(L_{11,\nu}^2 + L_{12,\nu}^2 + L_{21,\nu}^2 + L_{22,\nu}^2)\max\{\|u_1-u_2\|_1^{2\nu},\|v_1-v_2\|_2^{ 2\nu}\}) \\&
	\leq 2(L_{11,\nu}^2+L_{12,\nu}^2+L_{21,\nu}^2+L_{22,\nu}^2)\|x-y\|^{2\nu}
	\end{align*}
	and
	$$
	\| g(x)-g(y)\|_* \leq \sqrt{2(L_{11,\nu}^2+L_{12,\nu}^2+L_{21,\nu}^2+L_{22,\nu}^2)} \|x-y\|^{\nu}.
	$$
\end{Rem}

\begin{Rem}
	Generally speaking, if the set $Q$ is bounded, one can consider different level of smoothness in \eqref{eq:Sadle_Hold1} and \eqref{eq:Sadle_Hold2}.
	More precisely, assume that for some numbers $\nu_{11}, \nu_{12}, \nu_{21}, \nu_{22} \in [0; 1]$, we have
	\begin{align}
	&\|\nabla_u f(u+\Delta u,v+\Delta v)-\nabla_u f(u,v)\|_{1,*}\leq \widehat{L}_{11}\| \Delta u\|_{1}^{\nu_{11}} + \widehat{ L}_{12}\| \Delta v\|_{2}^{\nu_{12}}, \label{eq:Sadle_Hold11} \\
	&\|\nabla_v f(u+\Delta u,v+\Delta v)-\nabla_v f(u,v)\|_{2,*}\leq \widehat{L}_{21}\| \Delta u\|_{1}^{\nu_{21}} + \widehat{L}_{22}\| \Delta v\|_{2}^{\nu_{22}} \label{eq:Sadle_Hold12}
	\end{align}
	for all $u, u+\Delta u\in Q_1, v, v+\Delta v \in Q_2$. Then the statement of  Lemma \ref{Lm:HoeldfToHoeldG} holds for $\nu = \min\{\nu_{11}, \, \nu_{12},\, \nu_{21} \, \nu_{22}\}  \in [0;1].$ Indeed, from \eqref{eq:Sadle_Hold11}, \eqref{eq:Sadle_Hold12}, we have
	\begin{equation*}
	\|\nabla_u f(u+\Delta u,v+\Delta v)-\nabla_u f(u,v)\|_{1,*}\leq \widehat{L}_{11} \cdot D_Q^{\nu_{11} - \nu} \cdot \| \Delta u\|_{1}^{\nu} + \widehat{L}_{12}\cdot D_Q^{\nu_{12} - \nu} \cdot\| \Delta v\|_{2}^{\nu},
	\end{equation*}
	\begin{equation*}
	\|\nabla_v f(u+\Delta u,v+\Delta v)-\nabla_v f(u,v)\|_{2,*}\leq \widehat{L}_{21}\cdot D_Q^{\nu_{21} - \nu} \cdot\| \Delta u\|_{1}^{\nu} + \widehat{L}_{22} \cdot D_Q^{\nu_{22} - \nu} \cdot\| \Delta v\|_{2}^{\nu}
	\end{equation*}
	for all $u, u+\Delta u\in Q_1, v, v+\Delta v \in Q_2$, $D_Q = \sup\{\|x-y\|\,|\, x, y \in Q\}$.
\end{Rem}

The next theorem shows, how  Algorithm \ref{Alg:UMP} can be applied to solve the saddle point problem \eqref{eq:SadPointPrSt}.
\begin{Th}\label{Th:UMPforSadPoint}
	Let the assumptions of Lemma \ref{Lm:HoeldfToHoeldG} hold, the set $Q$ be bounded, and $L_{\nu}$ be given in Lemma \ref{Lm:HoeldfToHoeldG}. Assume also that Algorithm \ref{Alg:UMP} with accuracy $\varepsilon$ is applied to the operator $g$ defined in \eqref{eq:gSadPointDef}. Let $w_i = (u^{i},v^{i})$ be the sequence generated by this algorithm. Then,
	$$
	\max_{v\in Q_2} f(\widehat{u}_k,v) - \min_{u \in Q_1} f(u,\widehat{v}_k) \leq \frac{ 2L_{\nu}^{\frac{2}{1+\nu}}}{ k \varepsilon^{\frac{1-\nu}{1+\nu}}} \max_{x\in Q}V[w_0](x)   + \frac{\varepsilon}{2},
	$$
	$$
	\text{{\rm where}} \quad (\widehat{u}_k, \widehat{v}_k) = \frac{1}{S_k}\sum_{i=0}^{k-1} M_i^{-1} (u^{i},v^{i}), \quad S_k=\sum\limits_{i = 0}^{k-1}  M_i^{-1}.
	$$
	Moreover, in the number of iterations
	$$
	O\left( \inf_{\nu\in[0,1]}\left(\frac{L_{\nu}}{\varepsilon} \right)^{\frac{2}{1+\nu}} \cdot \max_{x\in Q}V[w_0](x) \right),
	$$
	the algorithm finds a pair $(\widehat{u},\widehat{v})$ such that $\max_{v\in Q_2} f(\widehat{u},v) - \min_{u \in Q_1} f(u,\widehat{v}) \leq \varepsilon$.
\end{Th}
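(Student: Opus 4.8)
The plan is to reduce the saddle-point duality gap to the variational inequality residual that we already control via \cref{Cor:UMPHoldUniv}, and then invoke that corollary directly. First I would establish H\"older-continuity of the operator $g$ defined in \cref{eq:gSadPointDef}: under the assumptions of \cref{Lm:HoeldfToHoeldG}, that lemma already gives \cref{eq:GHoeldDef} with the stated $L_{\nu} = L_{11,\nu}+L_{12,\nu}+L_{21,\nu}+L_{22,\nu}$, so this step is immediate and lets me treat the problem as a H\"older-continuous monotone VI.

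The main content is the translation of the VI residual into the duality gap. Writing $x=(u,v)$ and $w_i=(u^i,v^i)$, the key identity to exploit is that for the saddle operator,
\[
\langle g(w_i), w_i - x \rangle = \langle \nabla_u f(u^i,v^i), u^i - u \rangle + \langle -\nabla_v f(u^i,v^i), v^i - v \rangle.
\]
Using convexity of $f(\cdot,v^i)$ in the first argument and concavity of $f(u^i,\cdot)$ in the second, I would bound each term below: $\langle \nabla_u f(u^i,v^i), u^i - u\rangle \geq f(u^i,v^i) - f(u,v^i)$ and $\langle -\nabla_v f(u^i,v^i), v^i - v\rangle \geq f(u^i,v) - f(u^i,v^i)$. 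Adding these, the $f(u^i,v^i)$ terms cancel, giving $\langle g(w_i), w_i - x\rangle \geq f(u^i,v) - f(u,v^i)$ for every $x=(u,v)\in Q$. Taking the $M_i^{-1}$-weighted average over $i=0,\dots,k-1$ and applying Jensen's inequality through the convexity of $f$ in $u$ and concavity in $v$, I obtain
\[
\frac{1}{S_k}\sum_{i=0}^{k-1} M_i^{-1}\langle g(w_i), w_i - x\rangle \geq f(\widehat{u}_k, v) - f(u, \widehat{v}_k).
\]
Maximizing the right-hand side over $x=(u,v)\in Q=Q_1\times Q_2$ separates into $\max_{v\in Q_2} f(\widehat{u}_k,v)$ and $\min_{u\in Q_1} f(u,\widehat{v}_k)$, so the duality gap is dominated by $\max_{x\in Q}\frac{1}{S_k}\sum_i M_i^{-1}\langle g(w_i), w_i - x\rangle$, which is exactly the quantity bounded in \cref{eq:UMPRate}. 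Substituting that bound yields the first displayed inequality of the theorem.

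For the complexity claim, I would set the right-hand side of the gap estimate to be at most $\varepsilon$: the $\varepsilon/2$ term is already below threshold, and requiring the first term to be at most $\varepsilon/2$ forces $k \geq \frac{4 L_{\nu}^{2/(1+\nu)}}{\varepsilon^{2/(1+\nu)}}\max_{x\in Q}V[w_0](x)$, which gives the stated $O\big(\inf_{\nu}(L_{\nu}/\varepsilon)^{2/(1+\nu)}\max_{x\in Q}V[w_0](x)\big)$ bound after taking the infimum over $\nu$ (legitimate since the algorithm is universal and uses neither $\nu$ nor $L_{\nu}$). The step I expect to be the only delicate point is the averaging: I must make sure the Jensen inequalities are applied in the correct direction for the convex part ($f(\widehat{u}_k,v)\leq \frac{1}{S_k}\sum_i M_i^{-1} f(u^i,v)$) and the concave part ($f(u,\widehat{v}_k)\geq \frac{1}{S_k}\sum_i M_i^{-1} f(u,v^i)$), and that the weights $M_i^{-1}/S_k$ form a genuine convex combination so that $(\widehat{u}_k,\widehat{v}_k)\in Q$ by convexity of $Q_1,Q_2$. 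Everything else is a direct citation of \cref{Lm:HoeldfToHoeldG} and \cref{Cor:UMPHoldUniv}.
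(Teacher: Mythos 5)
Your reduction of the duality gap to the weighted VI residual is exactly the paper's argument: pointwise convexity/concavity bounds with cancellation of the $f(u^i,v^i)$ terms, Jensen's inequality in both arguments (applied in the correct directions), separation of the maximum over $Q_1\times Q_2$, and the same complexity accounting with the infimum over $\nu$ justified by universality. However, there is one step that, as written, does not go through: the quantity you need to bound, $\max_{x\in Q}\frac{1}{S_k}\sum_{i} M_i^{-1}\langle g(w_i), w_i-x\rangle$, is \emph{not} the quantity bounded in \cref{eq:UMPRate}. \cref{Cor:UMPHoldUniv} bounds $\max_{u\in Q}\langle g(u),\widehat{w}_k-u\rangle$, and monotonicity gives $\langle g(u),\widehat{w}_k-u\rangle \leq \frac{1}{S_k}\sum_i M_i^{-1}\langle g(w_i),w_i-u\rangle$, i.e.\ the corollary's left-hand side is a \emph{lower} bound for your weighted sum; so knowing the corollary's statement tells you nothing about the weighted sum itself, and citing it here is circular in the wrong direction. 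The repair is immediate and is what the paper actually does: invoke \cref{Th:UMPGenRate} (with $\delta_u=\delta_{pu}=0$), which bounds the weighted sum directly by $\frac{1}{\sum_{i=0}^{k-1}M_i^{-1}}V[w_0](x)+\frac{\varepsilon}{2}$, and then combine it with the stepsize bound $M_i\leq 2L\left(\frac{\varepsilon}{2}\right)$ established in the proof of \cref{Cor:UMPHoldUniv} to get $\frac{1}{\sum_{i=0}^{k-1}M_i^{-1}}\leq \frac{2L_{\nu}^{\frac{2}{1+\nu}}}{k\varepsilon^{\frac{1-\nu}{1+\nu}}}$. With that one-line correction of the citation, your proof is complete and coincides with the paper's.
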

{\it Proof.}
By convexity of $f$ in $u$ and concavity of $f$ in $v$, we have, for all $u \in Q_1$,
\begin{align*}
\frac{1}{S_k} \sum_{i=0}^{k-1} \left\langle \nabla_u f(u^{i},v^{i}),u^{i}-u\right \rangle_1 & \geq  \frac{1}{S_k} \sum_{i=0}^{k-1} M_i^{-1} (f(u^{i},v^{i})-f(u,v^{i})) \\
& \geq  \frac{1}{S_k} \sum_{i=0}^{k-1} M_i^{-1} f(u^{i},v^{i})-f(u,\widehat{v}_k).
\end{align*}
In the same way, we obtain, for all $v \in Q_2$,
$$
\frac{1}{S_k} \sum_{i=0}^{k-1} M_i^{-1} \left\langle -\nabla_v f(u^{i},v^{i}),v^{i}-v\right \rangle_2 \geq -\frac{1}{S_k} \sum_{i=0}^{k-1} M_i^{-1} f(u^{i},v^{i})+f(\widehat{u}_k,v).
$$
Summing these inequalities, using \eqref{eq:gSadPointDef} and   Theorem \ref{Th:UMPGenRate}, we obtain that, for all $u \in Q_1$, $v \in Q_2$ and $x = (u,v)$,
$$
f(\widehat{u}_k,v) - f(u,\widehat{v}_k) \leq  \frac{1}{S_k} \sum_{i=0}^{k-1} M_i^{-1} \langle g(w_i), w_i - x \rangle \leq \frac{1}{\sum_{i=0}^{k-1}M_i^{-1}} V[w_0](x) + \frac{\varepsilon}{2}.
$$
Since $M_i \leq 2 L\left(\frac{\varepsilon}{2}\right)$, where $L(\cdot)$ is given in Example \ref{Ex:HoldInexact}, and the set $Q$ is bounded, we obtain
$$
\max_{v\in Q_2} f(\widehat{u}_k,v) - \min_{u \in Q_1} f(u,\widehat{v}_k) \leq \frac{2 L_{\nu}^{\frac{2}{1+\nu}}}{ k \varepsilon^{\frac{1-\nu}{1+\nu}}} \max_{x\in Q}V[w_0](x)   + \frac{\varepsilon}{2}.
$$
The iteration complexity follows from the requirement for $k$ to make the first term in r.h.s. smaller than $\varepsilon$.
\qed

\begin{Rem}
	Since, for a saddle point $(u_*,v_*) \in Q,$ $\max\limits_{v \in Q_2}f(u_*,v)=\min\limits_{u \in Q_1}f(u,v_*)$, the inequality  $\max_{v\in Q_2} f(\widehat{u},v) - \min_{u \in Q_1} f(u,\widehat{v}) \leq \varepsilon$ means that $(\widehat{u},\widehat{v})$ is an $\varepsilon$-optimal solution.
\end{Rem}

\begin{Rem}
	For $\mu$-strongly convex-concave saddle point problems,  Algorithm \ref{Alg:RUMP} returns a point $x_p$ such that $\|x_p-x_*\|^2\leq \varepsilon + \frac{2\delta_u+4\delta{pu}}{\mu}$ (for the exact solution $x_*$) with the complexity estimate given in \eqref{eq_string_monot}.
\end{Rem}

An important particular case of saddle point problem is the Lagrange saddle point problem for a constrained minimization problem.
Let us consider the following convex optimization problem
\begin{equation}
\label{eq:ex4_problem}
\min \{f(x) \;\;:\;\; x \in Q, \quad \phi_j(x)\leq 0, \quad j=1,...,m\},
\end{equation}
where $Q$ is a convex and compact set, $f$ and $\phi_j$ are convex and have H\"older-continuous subgradients
$$
\|\nabla f(x) - \nabla f(y)\|_* \leq L_{\nu_0} \|x - y\|^{\nu_0},
$$ 
$$
\|\nabla \phi_j(x) - \nabla \phi_j(y)\|_* \leq L_{\nu_j} \|x - y\|^{\nu_j} \quad \forall x, y \in Q, j=1,...,m 
$$
for some $\nu_0, ..., \nu_m \in [0,1]$ and $L_{\nu_0}, ..., L_{\nu_m} >0$.
The corresponding Lagrange function for this problem is $L(x,\lambda)= f(x) + \sum\limits_{j=1}^{m}\lambda_j\phi_j(x)$, where $\lambda_j \geq 0$, $j=1,...,m$ are Lagrange multipliers.
If a point $(x_*,\lambda_*)$ is a saddle point of the convex-concave Lagrange function $L(x,\lambda)$, then $x_*$ is a solution to \eqref{eq:ex4_problem}.
Assume also that the Slater's constraint qualification condition holds, i.e., there exists a point $\bar{x}$ such that $\phi_j(\bar{x}) < 0$, $j=1,...,m$. Then it can be shown that the optimal Lagrange multiplier $\lambda_*$ is bounded. Thus, instead of minimization problem  \eqref{eq:ex4_problem}, one can consider the saddle point problem $\min_{x \in Q} \max_{\lambda \in \Lambda} L(x, \lambda)$, which is a convex-concave problem on a bounded set. Using  Lemma \ref{Lm:HoeldfToHoeldG} and the H\"older continuity assumption for the subgradients of $f$ and $\phi_j$, we see that Algorithm \ref{Alg:UMP} and  Theorem \ref{Th:UMPforSadPoint} can be applied. We underline that by its nature, the smoothness level of the primal problem and the dual problem is different. Thus, it is important for the algorithm to adapt to the actual level of smoothness.

Next we introduce the concept inexact oracle for saddle point problems. 

\begin{Def}\label{Def:IOSedlo}
	\textit{
		Assume that for some $\delta_u > 0$ (uncontrolled error) and for any number $\delta_c > 0$ (controlled error) there exists a constant $L(\delta_c) \in ]0, +\infty[$ such that, for any points $x,y \in Q$, one can calculate $\tilde{g}(x,\delta_c,\delta_u)$ and\\ $\tilde{g}(y,\delta_c,\delta_u) \in E^*$ satisfying
		\begin{equation}\label{eq:g_or_def_sedlo}
		\langle \tilde{g}(y,\delta_c,\delta_u) - \tilde{g}(x,\delta_c,\delta_u), y - z \rangle \leq \frac{L(\delta_c)}{2}\left(\|y-x\|^2 + \|y-z\|^2\right) + \delta_c+\delta_u,
		\end{equation}
		and, for each $x = (u_x, v_x), y = (u_y, v_y) \in Q$,
		\begin{equation}\label{eq:g_or_deff_sedlo}
		f(u_y, v_x) - f(u_x, v_y) \leq
		\langle \tilde{g}(y,\delta_c,\delta_u), y - x \rangle + \delta_u.
		\end{equation}
		Then the operator $\tilde{g}(\cdot,\delta_c,\delta_u)$ is called  inexact oracle for the problem \eqref{eq:SadPointPrSt}.}
\end{Def}

\begin{Rem}
	Recall (see Definition \ref{Def:IO}), that $\delta_c$ represents the error of the oracle, which one can control and make as small as we would like to. On the opposite, $\delta_u$ represents the error, which one can not control.
\end{Rem}

\begin{Ex}[\textbf{\it saddle point problems and ($\delta,L$)-oracle in optimization}]
	\label{Ex:delta_L_sad_point}

	Assume that we have access to the operator
	$$g_{\delta}(x)=\begin{pmatrix}
	g_{\delta, u}(u,v)\\
	- g_{\delta, v}(u,v)
	\end{pmatrix}, x=(u,v)\in Q,$$
	where $(f_{\delta, u}, g_{\delta, u})$ is a $(\delta,L)$-oracle for $f$ as a function of $u$, and $-(f_{\delta, v}, g_{\delta, v})$ is a $(\delta,L)$-oracle for ($-f$) as a function of $v$, see \eqref{EEE2}. \\ Define $\|x\| = \|(u, v)\| := \sqrt{\|u\|^2+\|v\|^2}$.
	By Example \ref{ExampVI2.5}, for each
	$x~=~(u_x, v_x)$, $y~=~(u_y, v_y)$, $z~=~(u_z, v_z) \in Q$,
	\begin{align}
	&\langle g_{\delta, u}(u_y, v_y)- g_{\delta, u}(u_x, v_x), u_y-u_z \rangle \leq \frac{L}{2}(\|u_y-u_z\|^2+\|u_y-u_x\|^2)+2\delta, \notag \\ 
	&\langle -g_{\delta, v}(u_y, v_y)+g_{\delta, v}(u_x, v_x), v_y-v_z \rangle \leq \frac{L}{2}(\|v_y-v_z\|^2+\|v_y-v_x\|^2)+2\delta, \notag 
	\end{align}
	and we have
	\begin{equation}\label{eq:ex4_9s1}
	\langle g_{\delta}(y) - g_{\delta}(x),y-z \rangle \leq \frac{L}{2}(\|y-z\|^2+\|y-x\|^2)+4\delta.
	\end{equation}
	Further, from inequalities
	\begin{align}
	&f(u_{y},v_{y})-f(u_{x},v_{y})\leq \langle g_{\delta,u}(y),u_{y}-u_{x}\rangle+\delta, \notag \\
	& f(u_{y},v_{x})-f(u_{y},v_{y})\leq \langle -g_{\delta,v}(y),v_{y}-v_{x}\rangle+\delta \notag
	\end{align}
	we have $
	f(u_{y},v_{x})-f(u_{x},v_{y})\leq\langle g_{\delta}(y),y-x\rangle+2\delta$.
	So, $ \tilde{g}(y,\delta_c,\delta_u)=g_{\delta}(y)$ satisfies  Definition  \ref{Def:IOSedlo}
	with $\delta_{u} = 4 \delta$, $\delta_c = 0$ and $L(\delta_c) = L$.
\end{Ex}	
Similarly to Theorem \ref{Th:UMPforSadPoint} it is sufficient to make 
$$
O\left( \inf_{\nu\in[0,1]}\left(\frac{L}{\varepsilon} \right) \cdot \max_{x\in Q}V[w_0](x) \right)
$$
iterations of Algorithm \ref{Alg:UMP}, to find a pair $(\widehat{u},\widehat{v})$ satisfying
$$
\max_{v\in Q_2} f(\widehat{u},v) - \min_{u \in Q_1} f(u,\widehat{v}) \leq \varepsilon + O(\delta_u +\delta_c).
$$

\section{Conclusions}
\if 0 We propose a generalization of Mirror Prox algorithm for VIs with monotone operator given with some inexactness. In the particular case of H\"older-continuous operator, our algorithm is universal with respect to H\"older parameters. We also present how restart technique can be applied to accelerate our method under additional assumption of strong monotonicity of the operator. It would be interesting to understand, whether the dual extrapolation technique 
can be extended for the case of inexact oracle and to obtain a universal method. To show the advantages of the proposed Algorithm \ref{Alg:UMP}, a series of numerical experiments were performed and reported in Appendix B. \fi

In this paper we introduced a definition of inexact oracle for VIs with monotone operator and provided several examples, where such inexactness naturally arises. In particular, we showed, that H\"older-continuous operator is covered by our general framework of inexact oracle. In order to solve VIs with inexact oracle, we generalized Mirror Prox algorithm \cite{nemirovski2004prox} and provided theoretical guarantees for its convergence rate. As a corollary, we proved that this method is universal for VIs with H\"older-continuous monotone operator, i.e., has complexity
$$
O\left( \inf_{\nu\in[0,1]}\left(\frac{L_{\nu}}{\varepsilon} \right)^{\frac{2}{1+\nu}}R^2 \right)
$$
and, unlike existing methods, does not require any knowledge of $L_{\nu}$ or $\nu$. We generalized our algorithm for the case of $\mu$-strongly monotone operators and obtain complexity
$$
O\left( \inf_{\nu \in [0,1]} \left(\frac{L_{\nu}}{\mu}\right)^{\frac{2}{1+\nu}}\frac{1}{\varepsilon^{\frac{1-\nu}{1+\nu}}} \cdot \log_2 \frac{R^2}{\varepsilon}\right)
$$
to find a point $\widehat{x} \in Q$ such that $\|\widehat{x} - x_* \| \leq \varepsilon$. Finally, we showed, how our method can be applied to convex-concave saddle point problems.
In the follow-up work \cite{stonyakin2020inexact} we extended the proposed here methods for the case of inexact relative smoothness and strong convexity.

As a future work it would be interesting to generalize this method for the case of stochastic VIs using the techniques in \cite{dvinskikh2020line-search,gorbunov2021near-optimal} and apply it for the Wasserstein barycenter problem \cite{tiapkin2020stochastic}, apply this method for solving differential games in the spirit of \cite{dvurechensky2015primal-dual,dvurechensky2014algorithms}, extend this algorithm to the case of VIs with operator having higher-order H\"older-continuous derivatives \cite{nesterov2019implementable,gasnikov2019near,bullins2020higher-order,ostroukhov2020tensor}, propose a generalization for zeroth order methods for saddle point problems \cite{gladin2021solving,sadiev2021zeroth-order} using the techniques in \cite{gorbunov2018accelerated,shibaev2021zeroth-order,dvurechensky2021accelerated}, for accelerated methods for saddle-point problems \cite{gasnikov2021accelerated,tominin2021accelerated,mmp_Alkousa_2020,motor_titov_2021}, for decentralized distributed setup by combining with the ideas of \cite{dvinskikh2021decentralized,rogozin2021decentralized,beznosikov2021decentralized}.

\bigskip 

\textbf{Acknowledgements}
	The authors are grateful to Yurii Nesterov for fruitful discussions. The research by P. Dvurechensky and A. Gasnikov in Section \ref{S:IO} was 
	supported by the Ministry of Science and Higher Education of the Russian Federation (Goszadaniye) No. 075-00337-20-03, project No. 0714-2020-0005.
	The research by F. Stonyakin in Section \ref{S:saddle_point} and Appendix B was supported by Russian Science Foundation (project 18-71-00048).

\bibliographystyle{siamplain}
\bibliography{references}

\appendix 
\section*{Appendix A}\label{appA}
Proof of Lemma \ref{Lm:HoldToLip}\\
{\it Proof}
Let us fix some $\nu \in [0,1]$. Then, for any $x \in [0,1 ]$, $x^{2\nu}\leq 1$. On the other hand, for any $x \geq 1$, $\ x^{2\nu}\leq x^{2}$. Thus, for any $x \ge 0$, $x^{2\nu}\leq x^2+ 1$.
Hence, for any $\alpha , \beta \geq 0$,
$$
\alpha^{\nu}\beta\leq \frac{\alpha^{2\nu}}{2}+\frac{\beta^{2}}{2}\leq \frac{\alpha^{2}}{2}+\frac{\beta^{2}}{2}+\frac{1}{2}.
$$
Substituting $\alpha=\frac{ba^{\frac{1}{1+\nu}}}{\delta^{\frac{1}{1+\nu}}}$ and $\beta=\frac{ca^{\frac{1}{1+\nu}}}{\delta^{\frac{1}{1+\nu}}}$, we obtain
$$
\frac{b^{\nu}a^{\frac{\nu}{1+\nu}}}{\delta^{\frac{\nu}{1+\nu}}}\frac{ca^{\frac{1}{1+\nu}}}{\delta^{\frac{1}{1+\nu}}}\leq \frac{b^{2}a^{\frac{2}{1+\nu}}}{2\delta^{\frac{2}{1+\nu}}}+\frac{c^{2}a^{\frac{2}{1+\nu}}}{2\delta^{\frac{2}{1+\nu}}}+\frac{1}{2}
$$
and
$$
ab^{\nu}c \leq \left(\frac{1}{\delta}\right)^{\frac{1-\nu}{1+\nu}} \frac{a^{\frac{2}{1+\nu}}}{2} \left(b^2+c^2\right) + \frac{\delta}{2}.
$$
\qed

\section*{Appendix B}\label{S:numerics}
To show the practical performance of the proposed Algorithm \ref{Alg:UMP}, we performed a series of numerical experiments for the Lagrange saddle point problem induced by the Fermat-Torricelli-Steiner problem.  

All experiments were made using Python 3.4, on a computer with Intel(R) Core(TM) i7-8550U CPU @ 1.80GHz, 1992 Mhz, 4 Core(s), 8 Logical Processor(s), and 8 GB RAM.

We consider an example of a variational inequality with a non-smooth, i.e., with $\nu = 0$, and non-strongly monotone operator. For this VI, the proposed universal method, due to its adaptivity to the smoothness level of the problem, works in practice with iteration complexity much smaller than the one predicted by the theory.
This example is inspired by the well-known {\it Fermat-Torricelli-Steiner problem}, in which we add some non-smooth functional constraints. This problem can be solved by a switching subgradient scheme \cite{polyak1967general,bayandina2018mirror} with complexity $O(1/\varepsilon^2)$, but as we will see, our method allows to obtain much faster convergence in practice than the one given by this bound.

More precisely, for a given set of $N$ points $A_k \in \mathbb{R}^n, k=1,...,N$ consider the optimization problem
$$ \min_{x \in Q} \left\{f(x):= \sum\limits_{k=1}^N \|x - A_k\|_2 \left| \; \varphi_p(x):= \sum_{i=1}^n \alpha_{pi}|x_i| -  1 \leq 0 \right., \; p=1,...,m \right\},$$
where $Q$ is a convex compact, $\alpha_{pi}$ are drawn from the standard normal distribution and then truncated to be positive.
The corresponding Lagrange saddle point problem is defined as 
$$
\min_{x \in Q} \max_{\lambda = (\lambda_1,\lambda_2,\ldots,\lambda_m)^T \in \mathbb{R}^m_+} L(x,\lambda):=f(x)+\sum\limits_{p=1}^m\lambda_p\varphi_p(x),
$$ 
As it was described in \eqref{S:saddle_point}, this problem is equivalent to the variational inequality with monotone non-smooth operator
$$
G(x,\lambda)=
\begin{pmatrix}
\nabla f(x)+\sum\limits_{p=1}^m\lambda_p\nabla\varphi_p(x), \\
(-\varphi_1(x),-\varphi_2(x),\ldots,-\varphi_m(x))^T
\end{pmatrix}.
$$
For simplicity, we assume that there exists (potentially very large) bound for the optimal Lagrange multiplier $\lambda^*$, which allows us to compactify the feasible set for the pair $(x,\lambda)$ to be a Euclidean ball of some radius. We believe that the approach from \cite{monteiro2010complexity,dang2015convergence} to deal with unbounded feasible sets can be extended to our setting and we leave this for future work.

We run Algorithm \ref{Alg:UMP} for different values of  $n, m$, and $N$ with standard Euclidean prox-setup and the starting point $(x^0, \lambda^0) = \frac{1}{\sqrt{m+n}} \textbf{1} \in \mathbb{R}^{n+m}$, where $\textbf{1} $ is the vector of all ones.
The points $A_k$, $k=1,...,N$ are drawn randomly from the standard normal distribution. For each value of the parameters, the random data was drawn 10 times and the results were averaged.
The results of the work of  Algorithm \ref{Alg:UMP} are represented in Fig.  \ref{fig_time_iteration}. For different values of the accuracy $\varepsilon \in \{ 1/2^i, i=1,2,3,4,5,6\}$, we report the number of iterations and the running time in seconds required by Algorithm \ref{Alg:UMP} to reach an $\varepsilon$-solution of the considered problem. 

As it is known \cite{nemirovsky1983problem}, for a VI with a non-smooth operator, the theoretical iteration complexity estimate $O\left(\frac{1}{\varepsilon^2}\right)$ is optimal. However, experimentally we see from slope of the lines in Fig. \ref{fig_time_iteration} that,  due to the adaptivity, the proposed Algorithm \ref{Alg:UMP} has iteration complexity $O\left(\frac{1}{\sqrt[4]{\varepsilon}}\right)$.

\begin{figure}[ht]
	\minipage{0.50\textwidth}
	\includegraphics[width=\linewidth]{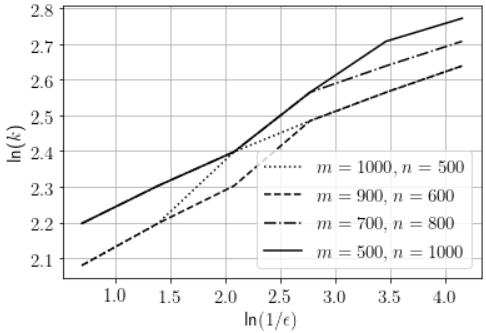}
	\endminipage\hfill
	\minipage{0.50\textwidth}
	\includegraphics[width=\linewidth]{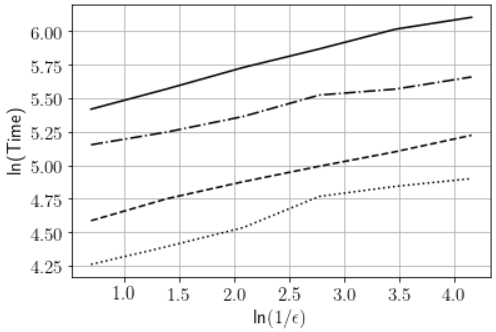}
	\endminipage\hfill
	\caption{Results of  Algorithm \ref{Alg:UMP} for Fermat--Torricelli--Steiner problem  with different values of $m$ and $n$.}
	\label{fig_time_iteration}
\end{figure}

	\section*{Appendix C}\label{numerical_compar}
	In this appendix, in order to demonstrate the performance of the Generalized Mirror Prox with restarts (Algorithm \ref{Alg:RUMP}), we consider the variational inequality with Lipschitz-continuous strongly monotone operator (see Example 5.2 in \cite{khanh2014modified})
	\begin{equation}\label{operator_numerical}
	g: Q \subset \mathbb{R}^n \to \mathbb{R}^n, \quad g(x) = x.
	\end{equation}
	We compare the work of the proposed Algorithm \ref{Alg:RUMP} with Modified Projection Method, which was proposed in \cite{khanh2014modified}. We run Algorithm \ref{Alg:RUMP} with different values of the accuracy $\varepsilon \in \{10^{-i}, i = 3,4,\ldots, 10 \}$ and for the dimension $ n = 10^7$. We take $Q = \{x \in \mathbb{R}^n, \|x\|_2 \leq 2\}$.  The results of the comparison are presented in Fig. \ref{fig_VI}, which illustrates the norm $\|x_{\text{out}} - x_*\|_2$, as a function of iterations, where $x_{\text{out}}$ is the output of each algorithm, and $x_*$ is the solution of the problem \eqref{eq:PrSt}, for the operator \eqref{operator_numerical}. Note that $x_* = \textbf{0} \in \mathbb{R}^n$. In the conducted experiments,  at  the first, we run Algorithm \ref{Alg:RUMP}, and calculate $\|x_{\text{out}} - x_*\|_2$ for the different previously mentioned values of  $\varepsilon$ and the corresponding number of iterations, resulted by the working of algorithm. For the calculated number of iteration of Algorithm \ref{Alg:RUMP}, we rum Modified Projection Method and calculate the corresponding values $\|x_{\text{out}} - x_*\|_2$.  From Fig. \ref{fig_VI}, we can see the higher efficiency  of the proposed Algorithm \ref{Alg:RUMP}, and the big difference between the results of the compared algorithms.

\begin{figure}[ht]
	\centering
	\minipage{0.70\textwidth}
	\includegraphics[width=\linewidth]{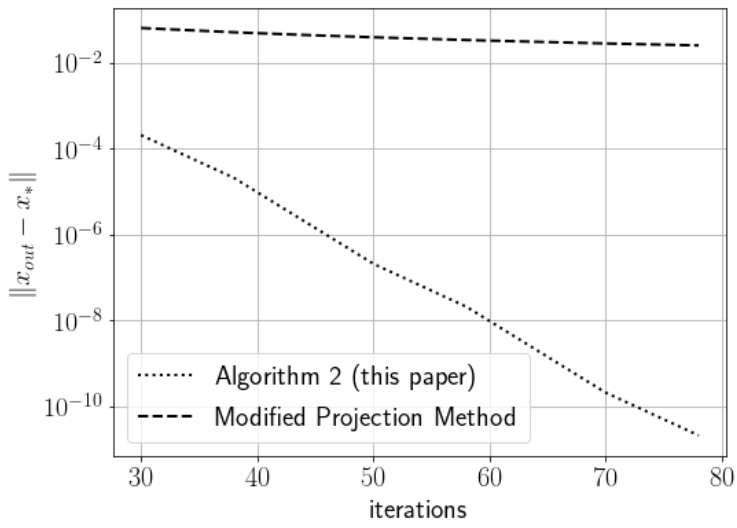}
	\endminipage\hfill
	\caption{Results of  Algorithm \ref{Alg:RUMP} and Modified Projection Method with $n = 10^7$.}
	\label{fig_VI}
\end{figure}

\end{document}